\newtheorem{theorem}{Theorem}[section]
\newtheorem{lemma}[theorem]{Lemma}
\newtheorem*{q1}{Question 1}
\newtheorem*{q2}{Question 2}
\numberwithin{subcase}{case}
\numberwithin{subsubcase}{subcase}
\numberwithin{claim}{theorem}
\newenvironment{definition}[1][Definition]{\begin{trivlist}
\item[\hskip \labelsep {\bfseries #1}]}{\end{trivlist}}
\newenvironment{example}[1][Example]{\begin{trivlist}
\item[\hskip \labelsep {\bfseries #1}]}{\end{trivlist}}
\newenvironment{customthm}[1]
  {\innercustomthm}
  {\endinnercustomthm}
\newenvironment{corollary-mannum}[2][Corollary]{\begin{trivlist}
\item[\hskip \labelsep {\bfseries #1}\hskip \labelsep {\bfseries #2}]}{\end{trivlist}}
\newenvironment{conjecture-mannum}[2][Conjecture]{\begin{trivlist}
\item[\hskip \labelsep {\bfseries #1}\hskip \labelsep {\bfseries #2}]}{\end{trivlist}}
\newcommand{\ds}{\displaystyle}
\title{\bf Conformal equivalence of analytic functions on compact sets.}
\author{Trevor Richards\thanks{Email: richardst@wlu.edu}\vspace{6mm}\\{\em Mathematics  Department, Washington and Lee University}\\{\em Lexington, VA United States}}
\begin{document}

\maketitle
\begin{abstract}
In this paper we present a geometric proof of the following fact.  Let $D$ be a Jordan domain in $\mathbb{C}$, and let $f$ be analytic on $cl(D)$.  Then there is an injective analytic map $\phi:D\to\mathbb{C}$, and a polynomial $p$, such that $f\equiv p\circ\phi$ on $D$ (that is, $f$ has a polynomial conformal model $p$).
\end{abstract}
\section{HISTORY AND OVERVIEW}\label{sect: History and overview.}%

It is a well known fact that if $G\subset\mathbb{C}$ is an open set, and $f:G\to\mathbb{C}$ is analytic, and $z\in G$ is a zero of $f$ with multiplicity $k$, then there is a Jordan domain $G_0\subset G$ which contains $z$, and an injective analytic map $\phi$ from $G_0$ onto some disk centered at the origin, such that $f\equiv\phi^k$ on $G_0$.  Of course if $cl(G_0)\subset G$, then $\partial G_0$ is contained in a level curve of $f$ (ie a connected set on which $|f|$ is constant).

In fact we can generalize this as follows.

\begin{theorem}\label{thm: Equiv to pure power.}
If $\lambda$ is any level curve of $f$ in $G$, and $G_0$ is a bounded face of $\lambda$ such that $G_0\subset G$, and $G_0$ contains a single distinct zero $z$ of $f$ with multiplicity $k$, then there is an injective analytic map $\phi$ from $G_0$ onto a disk $D$ centered at zero such that $f\equiv\phi^k$ on $G_0$.
\end{theorem}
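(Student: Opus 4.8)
My plan is to construct $\phi$ directly as a branch of $f^{1/k}$, with the main work being to show such a branch exists globally on $G_0$ and is injective onto a disk. First I would observe that since $G_0$ is a bounded face of the level curve $\lambda = \{|f| = c\}$ (for some constant $c$) and $G_0 \subset G$, the boundary $\partial G_0$ lies in $\lambda$, so $|f| \equiv c$ on $\partial G_0$. Since $z$ is the only zero of $f$ in $G_0$ and $f$ is analytic on $G_0$ (nonvanishing elsewhere), by the argument principle the total winding of $f$ around $\partial G_0$ equals $k$. The key topological claim is that $f: G_0 \to D_c := \{|w| < c\}$ is a proper map of degree exactly $k$, which I would verify by checking that $f$ has no critical points on $\partial G_0$ and that preimages of points near $\partial D_c$ stay near $\partial G_0$ — in other words $G_0$ is a \emph{single} face, so the level curve genuinely separates the $k$-fold-covered region from the outside.

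Next, I would pass to the branch of the $k$-th root. Because $f$ has a single zero of order $k$ at $z$, the function $f(w)/(w-z)^k$ is analytic and nonvanishing near $z$; more globally, I would argue that $G_0$ is simply connected (it is a face of a level curve bounding a region containing one zero, and one can show $\partial G_0$ is a single Jordan curve, or at least that $G_0$ is simply connected) so that a single-valued analytic branch $\phi$ of $f^{1/k}$ exists on $G_0$ with $\phi(z) = 0$ and $\phi'(z) \neq 0$. Then $\phi^k \equiv f$ by construction. To see $\phi$ is injective: if $\phi(w_1) = \phi(w_2)$ then $f(w_1) = f(w_2)$, and since $f$ is $k$-to-$1$ as a branched cover with a single branch point $z$ of full order $k$, the $k$ preimages of any value $f(w)$ are exactly the $k$ points $\{$ conjugates of $w$ under the deck group $\}$, which map to the $k$ distinct $k$-th roots of $f(w)/$(unit); since $\phi$ selects one such root consistently, distinct $w_i$ give distinct $\phi(w_i)$.

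Finally I would identify the image. Since $\phi$ is analytic and injective on $G_0$, it is a conformal map onto $\phi(G_0)$, and $\phi(G_0)^k$ wrapped $k$ times is $f(G_0) = D_c$; combined with properness of $f$ and the fact that $\phi$ is a homeomorphism up to the boundary with $|\phi| = c^{1/k}$ on $\partial G_0$, the image $\phi(G_0)$ is exactly the disk $D = \{|w| < c^{1/k}\}$ centered at $0$. This gives the desired conformal model $f \equiv \phi^k$ on $G_0$ with $\phi: G_0 \to D$ injective analytic.

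The main obstacle I anticipate is the topological/combinatorial step: verifying that a bounded face $G_0$ of a level curve containing exactly one zero is simply connected with connected boundary, and that $f$ restricted to $G_0$ is a proper branched cover of degree $k$ over the disk $D_c$. Subtleties arise because level curves of $|f|$ can have complicated structure (self-intersections at critical points of $f$ on $\lambda$, nested components, etc.), so care is needed in defining "face" and in arguing that no critical points or extra zeros hide inside $G_0$; this is where I would expect the bulk of the argument to go, likely leaning on the geometric structure of level curves developed in the surrounding paper.
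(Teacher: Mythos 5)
Your proposal is correct, but it takes a genuinely different route from the paper. The paper's proof is a one-liner by its own standards: pull $f|_{G_0}$ back to $\mathbb{D}$ via a Riemann map, observe that the resulting proper self-map of the disk is a unimodular constant times a finite Blaschke product with a single zero of multiplicity $k$ --- hence the $k$-th power of a M\"obius transformation --- and absorb that M\"obius map into the Riemann map. You instead build $\phi$ intrinsically as a branch of $f^{1/k}$: write $f(w)=(w-z)^k g(w)$ with $g$ nonvanishing, extract an analytic $k$-th root of $g$ on the simply connected domain $G_0$, and set $\phi(w)=(w-z)g(w)^{1/k}$. This avoids the Riemann mapping theorem and the Blaschke-product classification entirely, and it identifies the radius of the target disk explicitly as $c^{1/k}$; the price is that it does not generalize to several distinct zeros, which is exactly the direction the paper's Riemann-map argument is designed to extend (Theorem 1.2). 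Two of the points you flag as the ``main obstacles'' are easier than you fear: simple connectivity of $G_0$ is immediate because the paper defines a level curve to be a \emph{connected} set on which $|f|$ is constant, and any complementary face of a connected compact set in the sphere is simply connected; and properness of $f:G_0\to D_c$ needs no control of critical points on $\partial G_0$, only that $|f|\to c$ at $\partial G_0$ (which holds since $\partial G_0\subset\lambda$) together with $|f|<c$ inside (maximum modulus). Finally, your injectivity argument via deck transformations of the branched cover is workable but close to circular, since classifying the cover is essentially the theorem; it is cleaner to note that $\phi$ is itself a proper map of $G_0$ onto $D_{c^{1/k}}$ whose degree, by the argument principle, equals the number of zeros of $\phi$, namely one, so $\phi$ is a conformal bijection onto that disk.
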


To see this fact, one need only pull the restriction of $f$ to $G_0$ back to the unit disk $\mathbb{D}$ via a Riemann map $\phi_{G_0}$ for $G_0$, and observe that the resulting function on $\mathbb{D}$ is a constant multiple of a finite Blaschke product, which has a single zero with multiplicity $k$.  By tweaking $\phi_{G_0}$ in the natural way we can obtain the result of Theorem~\ref{thm: Equiv to pure power.}.

Theorem~\ref{thm: Equiv to pure power.} may be seen as the simplest instance of a problem which is at the core of the recently developing study of the conformal equivalence of meromorphic functions.  There are two basic questions here.

\begin{q1}
Find necessary and sufficient conditions on the function pairs $(f_1,D_1)$ and $(f_2,D_2)$ (where $D_i$ is a domain in $\mathbb{C}$, and $f_i$ is meromorphic on $D_i$) for there to exist a conformal map $\phi:D_1\to D_2$ satisfying $f_1\equiv f_2\circ\phi$ on $D_1$.
\end{q1}

In this case we say that the functions $f_1$ and $f_2$ are \textit{conformally equivalent}.  In the special case that each domain $D_i$ is a Jordan domain, and each $f_i$ satisfies 1)${f_i}'\neq0$ on $\partial D_i$ and 2) $|f_i|=1$ on $\partial D_i$, the author has given a solution in~\cite{Ri} to this problem in terms of the configurations of the critical level curves of $f_1$ and $f_2$ in $D_1$ and $D_2$.

The second and better trod basic question concerns the existence of a rational (or polynomial) conformal model to a function on a domain.

\begin{q2}[Conformal modeling question]
Let $D$ be a domain in $\mathbb{C}$, and let $f$ be meromorphic on $D$.  Find necessary and sufficient conditions for there to exist a rational function (or polynomial) $r$ and an injective analytic map $\phi:D\to\mathbb{C}$ satisfying $f\equiv r\circ\phi$ on $D$.
\end{q2}

In this case we would say that $r$ is a \textit{rational conformal model} for $f$ on $D$.  We will now outline the known partial answers to the conformal modeling question and discuss the various methods which different authors have taken to approach the question.

The first generalization of Theorem~\ref{thm: Equiv to pure power.} comes by removing the assumption $f$ contains a single distinct zero in $G_0$ as in the following Theorem~\ref{thm: GfBp equiv to a polynomial.}.  In this case the conclusion ``$f\equiv\phi^k$'' is replaced by the more general functional equation ``$f\equiv p\circ\phi$ for some degree $k$ polynomial $p\in\mathbb{C}[z]$''.

\begin{theorem}\label{thm: GfBp equiv to a polynomial.}
If $\lambda$ is any level curve of $f$ in $G$, and $G_0$ is some bounded face of $\lambda$ such that $G_0\subset G$, and $k$ is the number of zeros of $f$ in $G_0$ counting multiplicity, then there is an injective analytic map $\phi$ from $G_0$ onto a Jordan domain $D\subset\mathbb{C}$, and a degree $k$ polynomial $p\in\mathbb{Z}$ such that $f\equiv p\circ\phi$ on $G_0$.
\end{theorem}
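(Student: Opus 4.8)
The plan is to realize $G_0$ as a branched cover of a disk by $f$ itself, and then to "unbranch" it conformally onto a Jordan domain where the covering becomes a polynomial. Concretely, after normalizing so that $|f|=c$ on $\lambda$ (the level value), consider $g:=f/c$ on $\mathrm{cl}(G_0)$; since $\lambda\subset G$ and $\lambda$ is a level curve, $\partial G_0\subseteq\lambda$ and $|g|\equiv 1$ there, while $g$ has exactly $k$ zeros (with multiplicity) inside. Pulling back by a Riemann map $\phi_{G_0}:\mathbb{D}\to G_0$ gives a function $B:=g\circ\phi_{G_0}$ on $\mathbb{D}$ that extends continuously to $\partial\mathbb{D}$ with modulus $1$ there and has $k$ zeros inside, hence (by the standard argument already invoked for Theorem~\ref{thm: Equiv to pure power.}) $B$ is a constant multiple of a degree-$k$ finite Blaschke product. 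So far this only recovers a Blaschke-product model, not a polynomial one; the work is to replace the target $\mathbb{D}$ by a Jordan domain on which the model map becomes a genuine polynomial.

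The key step is the following: a degree-$k$ finite Blaschke product $B$ is conformally a polynomial. That is, there should be a conformal map $\psi$ from $\mathbb{D}$ onto some Jordan domain $D$, and a degree-$k$ polynomial $p$, with $B\equiv p\circ\psi$ on $\mathbb{D}$. I would prove this by tracking critical points and the induced graph structure: $B$ has $k-1$ critical points in $\mathbb{D}$ (counted with multiplicity), and the preimage under $B$ of a suitable slit or a radial/arc system from $0$ to $\partial\mathbb{D}$ decomposes $\mathbb{D}$ into $k$ Jordan subregions each mapped conformally onto the slit disk; reassembling these $k$ sheets according to the monodromy of $B$ yields $\mathbb{D}$ as an abstract Riemann surface that is simultaneously presented by a polynomial $p$ of the same degree with the same branching data. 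Equivalently and more cleanly: choose $p$ to be any polynomial of degree $k$ whose critical values, multiplicities, and the combinatorial gluing pattern of its "level-curve graph" over the unit circle match those of $B$ (one can build such a $p$ by prescribing its critical points, or simply take $p(z)=z^k$ composed appropriately and then deform), and then uniformize: the map $\phi_{G_0}$ followed by the sheet-identification furnishes an injective analytic $\phi:G_0\to D$ with $f\equiv (c\cdot p)\circ\phi$, and $c\cdot p$ is again a degree-$k$ polynomial.

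Alternatively—and this is probably the slicker route the author takes—one proves the polynomial-model statement for Blaschke products directly by a removal-of-singularities / reflection argument: given the degree-$k$ Blaschke product $B$ on $\mathbb{D}$, its critical set $\{w_1,\dots,w_{k-1}\}$ and the critical values $B(w_j)\in\mathbb{D}$ determine a branched covering of $\mathbb{D}$; one constructs the target Jordan domain $D$ as the image of $\mathbb{D}$ under the developing map associated to the flat structure pulled back from a disk around each critical value, and checks that in the $D$-coordinate the covering map extends holomorphically across $\partial D$ to a proper map of $\mathbb{C}\cup\{\infty\}$ of degree $k$ fixing $\infty$, i.e. a polynomial. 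Then $\phi := (\text{developing map})\circ\phi_{G_0}^{-1}$ on $G_0$ is the desired conformal model map.

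The main obstacle is the middle step: showing that the purely combinatorial/branching data of a finite Blaschke product is actually realized by a polynomial, and that the resulting target region $D$ is a genuine Jordan domain (in particular, that the sheet-gluing does not create punctures, self-tangencies, or higher genus). Controlling the global topology of $D$—ensuring $\partial D$ is a Jordan curve rather than a more complicated continuum—will require care, presumably handled by induction on $k$ (splitting off one zero at a time along a level-curve arc, using Theorem~\ref{thm: Equiv to pure power.} as the base case) or by an explicit construction of $p$ from prescribed critical points together with a connectivity/monodromy count. Everything else—the normalization, the Blaschke-product identification, and the final composition—is routine.
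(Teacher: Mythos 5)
Your first paragraph is exactly the paper's proof of this statement: pull $f|_{G_0}$ back to $\mathbb{D}$ by a Riemann map, observe that the resulting function is a constant multiple of a degree-$k$ finite Blaschke product, and invoke the fact that every finite Blaschke product admits a polynomial conformal model (Theorem~\ref{thm: Finite Blaschke product-polynomial conformal equivalence.}). The paper treats that last fact as an established theorem with several published proofs (and rederives it in Section~\ref{sect: Construction of PC_a.} from Theorems~\ref{thm: Conformal equivalence same as topological equivalence.} and~\ref{thm: Polynomial critical level curve config existence.}), so the bulk of your proposal --- the admittedly incomplete sketch of realizing the branching data of a Blaschke product by a polynomial and controlling the target Jordan domain --- is a re-derivation of the cited lemma, in a spirit close to the author's own level-curve-configuration argument rather than a gap in the proof of the statement actually at issue.
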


To see this, we again pull $f$ back to the unit disk via a Riemann map for $G_0$.  The resulting function on $\mathbb{D}$ is again a constant multiple of a finite Blaschke product (this time with possibly several distinct zeros).  The desired result now follows from the following Theorem~\ref{thm: Finite Blaschke product-polynomial conformal equivalence.}.

\begin{customthm}{\ref{thm: Finite Blaschke product-polynomial conformal equivalence.}}
Given any finite Blaschke product $B$, there is an injective analytic map $\phi:\mathbb{D}\to\mathbb{C}$, and a polynomial $p\in\mathbb{C}[z]$  such that $B\equiv p\circ\phi$ on $\mathbb{D}$.
\end{customthm}

Theorem~\ref{thm: Finite Blaschke product-polynomial conformal equivalence.} has at least four fundamentally different proofs~\cite{EKS,LS,Ri,Yo}, and at the end of this section we will briefly discuss each proof.

In 2013 on the internet mathematics forum {\em math.stackexchange.com}, the author posted the following generalization of Theorem~\ref{thm: Finite Blaschke product-polynomial conformal equivalence.} as a conjecture, and received a proof~\cite{LS} (though not yet published in a peer reviewed journal) from users George~Lowther and David~Speyer.  This conjecture (which in light of the proof posted on \textit{math.stackexchange.com} we now refer to as a theorem) may be reformulated as follows.

\begin{theorem}\label{thm: Conformal equivalence, most general.}
If $D\subset\mathbb{C}$ is open and bounded, and $f$ is meromorphic on $cl(D)$, then there is an injective analytic map $\phi:D\to\mathbb{C}$, and a rational function $r\in\mathbb{C}(z)$ such that $f\equiv r\circ\phi$ on $D$.
\end{theorem}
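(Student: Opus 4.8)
The plan is to realize $(f,D)$ as the restriction of a rational map, by completing it to a branched cover of the Riemann sphere of genus $0$. That is, I would produce a \emph{compact} Riemann surface $Y$ of genus $0$, a nonconstant holomorphic map $F\colon Y\to\hat{\mathbb C}$, and a conformal embedding $\iota\colon D\hookrightarrow Y$ with $F\circ\iota=f$. Granting this: a genus-zero compact Riemann surface is biholomorphic to $\hat{\mathbb C}$, so fixing such a biholomorphism $\psi\colon\hat{\mathbb C}\to Y$, the map $r:=F\circ\psi$ is a nonconstant holomorphic self-map of $\hat{\mathbb C}$, hence rational, while $\phi:=\psi^{-1}\circ\iota$ is injective and analytic on $D$ with $r\circ\phi=F\circ\psi\circ\psi^{-1}\circ\iota=f$. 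Since $D$ is bounded, $\phi$ cannot map $D$ onto all of $\hat{\mathbb C}$ (an injective holomorphic image of $D$ is biholomorphic to $D$, which is not compact), so after conjugating $r$ and $\phi$ by a M\"obius transformation carrying some point of $\hat{\mathbb C}\setminus\phi(D)$ to $\infty$ we may take $\phi$ valued in $\mathbb C$. Thus everything reduces to constructing $(Y,F,\iota)$.

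First, some reductions. Since $f$ is meromorphic on the compact set $cl(D)$ it is meromorphic on a bounded open neighbourhood $U$ of it; covering $cl(D)$ by finitely many round disks inside $U$ and letting $D^{\sharp}$ be their union, we have $D\subseteq D^{\sharp}$ and $cl(D^{\sharp})\subseteq U$, and proving the result for $(f,D^{\sharp})$ and restricting $\phi$ to $D$ finishes; so we may assume $D$ is a finite union of disks. A transversality argument (perturbing centres and radii) then lets us assume $f'\ne 0$ on $\partial D$, that the critical points of $f$ in $D$ remain in $D$ with pairwise distinct critical values $v_1,\dots,v_m$, that no $v_i$ lies on $f(\partial D)$, and that $f(\partial D)$ is a finite embedded graph $\Gamma\subset\hat{\mathbb C}$. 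Poles are harmless: on $\hat{\mathbb C}$ they are just the points of $f^{-1}(\infty)$; and if $D$ is disconnected one runs the construction below on all components at once.

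Now enlarge $\Gamma$ to a connected finite embedded graph $\Lambda\subset\hat{\mathbb C}$ having each $v_i$ among its vertices. Every face $\Omega$ of $\hat{\mathbb C}\setminus\Lambda$ is simply connected (the complement of a connected compact set in the sphere has simply connected components) and meets neither a critical value nor $f(\partial D)$; hence $f\colon f^{-1}(\Omega)\cap D\to\Omega$ is a proper local homeomorphism, i.e.\ a trivial covering, so $f^{-1}(\Omega)\cap D$ is a disjoint union of $n_\Omega\ge 0$ sheets each carried biholomorphically onto $\Omega$. Thus $D$ is assembled from these sheets by gluing them along the graph $f^{-1}(\Lambda)\cap D$, with all ramification lying over vertices of $\Lambda$. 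To build $Y$, pick an integer $M\ge\max_\Omega n_\Omega$, place $M$ sheets over every face with the first $n_\Omega$ of them being those coming from $D$, and glue all the sheets over the edges and vertices of $\Lambda$ so that (i) over the ``$D$-region'' the gluing restricts to the one already present in $D$, (ii) the resulting surface is connected, and (iii) it is closed. Then $F\colon Y\to\hat{\mathbb C}$ is a holomorphic branched cover of some degree $M$, and $D$ embeds as the union of the $D$-region sheets, with $F$ restricting to $f$.

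The crux — and the step I expect to be the real difficulty — is to perform the gluing so that $Y$ has genus $0$. By Riemann--Hurwitz, if $B$ denotes the total branching of $F$ then $2-2g(Y)=2M-B$, so genus $0$ is exactly the identity $B=2M-2$: the ramification forced by $f$'s own critical points is fixed, but over the remaining vertices of $\Lambda$ and among the added sheets we are free, and $M$ may be taken as large as convenient. What must be shown is that, because $D$ is \emph{planar} — its sheets over the faces assemble to a surface-with-boundary of the Euler characteristic of a sphere with holes — the completion can always be taken planar as well: the sheet counts over the various faces can be equalized and the boundary curves filled in by finitely many elementary surgeries, each of the form ``adjoin a trivially-mapped sphere and cross-glue it across a slit over some edge'', which increases $M$ by $1$ and $B$ by $2$ and hence preserves $2M-B$; connectedness is arranged in the same way (a connected sum of spheres is again a sphere). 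Making this bookkeeping precise is the heart of the matter; once it is in hand, $Y\cong\hat{\mathbb C}$ and the first paragraph delivers the rational $r$ and the conformal $\phi$.
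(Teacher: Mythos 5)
Your overall strategy---complete $(f,D)$ to a branched cover $F\colon Y\to\hat{\mathbb{C}}$ by a compact genus-zero surface and then uniformize---is a genuinely different route from both proofs this paper engages with: the Runge-approximation-plus-open-mapping argument of Lowther and Speyer, which is what the paper cites for this general statement, and the level-curve construction of Section~\ref{sect: Proof of main result.} for the analytic, simply connected case. It is, however, close in spirit to the latter: both proceed by extracting the combinatorial data of $f$ on $cl(D)$ and completing it to the data of a globally defined map, and your ``genus-zero completion exists'' claim plays exactly the role of the realization statement Theorem~\ref{thm: Polynomial critical level curve config existence.}, which is the load-bearing input the paper imports from elsewhere. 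Your first paragraph (uniformizing $Y$, pushing an omitted point to $\infty$) and the sheet decomposition of $D$ over the faces of $\Lambda$ are correct. But the entire content of the theorem is concentrated in the step you explicitly defer, and as written that step is a restatement of the problem, not a proof.

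Moreover, the repair mechanism you propose cannot close the gap. The surgery ``adjoin a trivially-mapped sphere and cross-glue across a slit'' sends $(M,B)$ to $(M+1,B+2)$ and therefore \emph{preserves} the Euler characteristic $\chi(Y)=2M-B$ of the closed surface being assembled. So if your first completion comes out with genus $g>0$---and nothing in the construction prevents this: extending the partial edge-bijections arbitrarily will in general produce the wrong cycle structure at the vertices of $\Lambda$ and hence the wrong total branching $B$---no sequence of these moves can ever reduce it to genus $0$. These moves also do not cap off the free boundary arcs of $D$ lying over the edges of $\Lambda$; that is a separate matching problem (over each edge, extend a partial injection between the $M$-element sheet sets of the two adjacent faces to a bijection, coherently around every vertex), which slit surgeries in the interior do not touch. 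What is actually needed is either a direct construction achieving $B=2M-2$, i.e.\ simultaneous control of the number of cycles of the local monodromy at every vertex of $\Lambda$ subject to the cycles already forced by the critical points of $f$ in $D$, or genuinely genus-reducing surgeries supported away from $\iota(D)$; neither is supplied. Two smaller points: you are free to perturb $D$ but not $f$, so ``pairwise distinct critical values'' cannot be arranged (fortunately it is not needed---it suffices that all critical values be vertices of $\Lambda$); and the assertion that $f(\partial D)$ becomes a finite embedded graph after perturbation needs the observation that two analytic arcs either meet in finitely many points or overlap along a subarc.
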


Our main goal in this paper is to employ the geometric level curve methods used in~\cite{Ri} to prove the following analytic and simply connected case of Theorem~\ref{thm: Conformal equivalence, most general.}.

\begin{customthm}{\ref{thm: Analytic implies equiv to poly.}}
If $D\subset\mathbb{C}$ is a Jordan domain, and $f$ is analytic on $cl(D)$, then there is polynomial $p\in\mathbb{C}[z]$ and an injective analytic map $\phi:D\to\mathbb{C}$ such that $f\equiv p\circ\phi$ on $D$.
\end{customthm}

We will prepare for the proof of Theorem~\ref{thm: Analytic implies equiv to poly.} in Section~\ref{sect: Construction of PC_a.} by constructing a set $PC_a$ which parameterizes the possible critical level curve configurations of a complex polynomial.  This set was introduced in~\cite{Ri}, where the following theorem was proved.

\begin{customthm}{\ref{thm: Polynomial critical level curve config existence.}}
Given any critical level curve configuration in $PC_a$, there is a polynomial $p\in\mathbb{C}[z]$ which has this configuration.
\end{customthm}

In Section~\ref{sect: Proof of main result.}, we bootstrap Theorem~\ref{thm: Polynomial critical level curve config existence.} to a proof of Theorem~\ref{thm: Analytic implies equiv to poly.}.  Finally, in Section~\ref{sect: Future work.}, we identify some directions for future work in the area.

\subsection{\small The proof of Ebenfelt, Khavinson, and Shapiro}

In their paper, Ebenfelt et. al. do not discuss the notion of conformal equivalence directly, but rather prove a theorem for fingerprints of smooth curves which may be reinterpreted as the statement of Theorem~\ref{thm: Finite Blaschke product-polynomial conformal equivalence.}.

For a smooth shape $\Gamma$, we let $\phi_-$ and $\phi_+$ denote normalized Riemann maps from $\mathbb{D}$ and $\widehat{\mathbb{C}}\setminus\overline{\mathbb{D}}$ to the bounded and unbounded faces of $\Gamma$ respectively.  Then we define the fingerprint of $\Gamma$ to be the diffeomorphism $k_\Gamma={\phi_+}^{-1}\circ\phi_-:\mathbb{T}\to\mathbb{T}$.  If $\Gamma'=T(\Gamma)$ where $T(z)=az+b$ for some $a>0$ and $b\in\mathbb{C}$, then $k_{\Gamma'}=k_\Gamma\circ\psi$ for some conformal map $\psi:\mathbb{D}\to\mathbb{D}$ (ie for some degree~$1$ finite Blaschke product $\psi$).

It is known (as a result of some work of Pfluger~\cite{P} in the area of quasiconformal mappings) that the map $\mathcal{F}:\Gamma\mapsto k_\Gamma$ from the collection of smooth shapes $\Gamma$ (modulo translation and scaling) to the diffeomorphisms of $\mathbb{T}$ (modulo precomposition with a conformal self-map of the disk) is in fact a bijection.  Ebenfelt et. al. show that if one restricts $\mathcal{F}$ to the collection of smooth shapes $\Gamma$ which arise as proper polynomial lemniscates (ie lemniscates of a polynomial $p$ such that all zeros of $p$ are contained in the bounded face of $\Gamma$), then the restricted map $\mathcal{F}$ is a bijection onto the collection of diffeomorphisms $S:\mathbb{T}\to\mathbb{T}$ of the form $S=B^{1/n}$ for some degree $n$ Blaschke product (where $n$ is any positive integer).

Observing that, if $\Gamma$ is a proper lemniscate for a degree $n$ polynomial $p$, then the map ${\phi_+}^{-1}=p^{1/n}$, the surjectivity claim for the function $\mathcal{F}$ restricted to proper polynomial lemniscates in the preceding paragraph may easily be seen to be equivalent to our Theorem~\ref{thm: Finite Blaschke product-polynomial conformal equivalence.}.  Their method of proof, then, consists of parameterizing the collection of proper degree $n$ polynomial lemniscates by a certain subset of $\mathbb{C}^{n-2}$, and the collection of degree~$n$ finite Blaschke products by another subset of $\mathbb{C}^{n-1}$.  Viewing now the function $\mathcal{F}$ as mapping the first set to the second, they use Koebe's continuity method to show that the range of $\mathcal{F}$ is a clopen subset of the connected codomain, and thus equal to the entire thing, establishing the surjectivity result.

\subsection{\small The approach of Younsi}

Younsi simplified the picture substantially by bringing the machinery of conformal welding to bear on the problem.  Given a degree~$n$ finite Blaschke product $B$, one conformally welds a copy of the function $z\mapsto z^n$ on the region $\widehat{\mathbb{C}}\setminus\mathbb{D}$
to the function $B$ restricted to $\mathbb{D}$.  The resulting function is conformally equivalent to a proper polynomial, which is conformally equivalent to $B$ on the bounded face of its lemniscate.

This approach has application to a wider setting than just finite Blaschke products.  As with Ebenfelt et. al., Younsi approaches the problem from the standpoint of fingerprints of shapes, and uses it to characterize the fingerprints of proper rational lemniscates (proper in the sense that all zeros of the rational function are in the bounded face, and all poles are in the unbounded face).  Moreover, an identical welding argument as described above can be used to show that the conclusion of Theorem~\ref{thm: Finite Blaschke product-polynomial conformal equivalence.} holds when the finite Blaschke product $B$ is replaced by a ratio of finite Blaschke products $B/A$ (subject to the constraint that $(B/A)'\neq0$ on $\mathbb{T}$) and ``polynomial" is replaced by ``rational function".

Recently Younsi~\cite{RY} (working jointly with the author) has applied these conformal welding techniques to provide a positive answer to the conformal modeling question for meromorphic functions on psuedo-tracts.  That is, it has been shown that if $D$ is a Jordan domain with smooth boundary, and $f$ is meromorphic on $cl(D)$, and $f(\partial D)$ is a Jordan curve, then $f$ may be conformally modeled by a rational function $r$ on $D$.  One of the major advantages of this approach is that in the case described above the rational function $r$ may be taken to have the smallest degree possible (namely the maximal number of preimages under $f$ of any point, counted with multiplicity).

\subsection{\small The approach of Lowther and Speyer}

Users George~Lowther and David~Speyer took a classical complex analysis aproach to the proof of Theorem~\ref{thm: Conformal equivalence, most general.}.  While the proof given on \textit{math.stackexchange.com} applies only to analytic functions on the disk, it readily extends to prove the full strength of Theorem~\ref{thm: Conformal equivalence, most general.}, and this extension is what I will describe here.  One first uses a generalization of Runge's theorem to find a sequence of rational functions $\{r_n\}$ which converge uniformly to the function $f$ on the closure of the domain $cl(D)$, and such that the rational functions match the derivative data of $f$ at the critical points of $f$.  From there, one uses the the open mapping theorem to show that for sufficiently large $n$, there is an injective analytic map $\phi_n:D\to\mathbb{C}$ such that $f=r_n\circ\phi_n$.

This approach has the immediate advantage of answering in the affirmative the conformal modeling question for the largest class of functions of any of the approaches discussed here, namely functions meromorphic on a compact set.  The only apparent downside to this approach is that it does not appear to have any hope of giving information about the degree of the conformal model attained, as the degrees of the rational functions found using Runge's theorem approach infinity.

\subsection{\small The approach of the author}

The author has taken a very different and geometric approach to the subject.  In~\cite{Ri}, we constructed a set $PC$ which parameterized the possible level curve configurations (a purely geometric concept) of a ratio of finite Blaschke products (again not having a critical point on $\mathbb{T}$).  (This construction will be repeated in Section~\ref{sect: Construction of PC_a.}.)  We then showed that any two such ratios whose critical level curve configurations are represented by the same memberm of $PC$ are conformally equivalent.  Finally, we showed that for every possible configuration of critical level curves, there is some polynomial having this configuration, which gives the desired result.

Our approach to Theorem~\ref{thm: Finite Blaschke product-polynomial conformal equivalence.} has two main advantages.  First, as we will see in this paper, it has the advantage of extendability.  While the approach of Younsi does end up giving an answer to the conformal mapping question for a class of functions which our approach at present does not handle (namely, ones with poles) it does not appear that the approach either of Ebenfelt et. al. or of Younsi is likely to have any application to the setting under consideration in this paper in which the only boundary condition imposed on the function $f$ is analyticity (ie. $f$ must be analytic across the boundary of the domain $D$).

Second, our approach demonstrates the rigidity of analytic functions.  The critical level curve configuration of a finite Blaschke product $B$ acts as a sort of skeleton for $B$.  This geometrical object is a conformal invariant of $B$, and the conformal equivalence result referred to above says that, for a given geometric configuration of critical level curves for $B$, there is only one way for the rest of $B$ to fill in analytically around this configuration.  This dependence of the analytic on the geometric is not seen in any of the approaches mentioned above.

Finally, in the proof of Theorem~\ref{thm: Finite Blaschke product-polynomial conformal equivalence.} in~\cite{Ri}, the degree of the polynomial conformal model $p$ of the finite Blaschke product $B$ may be taken to be equal to the degree of $B$.  As we will discuss in Section~\ref{sect: Future work.}, the level curve approach also seems to have some hope of giving a similar result in the setting of Theorem~\ref{thm: Analytic implies equiv to poly.} at some point in the future.
\section{CONSTRUCTION OF $PC_a$.}\label{sect: Construction of PC_a.}%

In order to introduce our proof of Theorem~\ref{thm: Analytic implies equiv to poly.}, we will first define the class of functions which were studied in~\cite{Ri}, namely the generalized finite Blaschke products.

\begin{definition}
For $G\subset\mathbb{C}$ and $f:G\to\mathbb{C}$, we say that the pair $(f,G)$ is a \textit{generalized finite Blaschke product} if the following requirements hold. 

\begin{itemize}
\item
$G$ is a Jordan domain.
\item
$f$ may be extended to an analytic function on $cl(G)$.
\item
$|f|=1$ on $\partial G$.
\item
$f'\neq0$ on $\partial G$.
\end{itemize}
\end{definition}

In fact it is not hard to show that such a pair $(f,G)$ is a generalized finite Blaschke product if and only if the composition $f\circ\phi$ is a finite Blaschke product, where $\phi:\mathbb{D}\to G$ is any Riemann map for $G$ (since any ramified covering from $\mathbb{D}$ to $\mathbb{D}$ is a finite Blaschke product).

\begin{definition}
Let ${H_a}'$ denote the set of all generalized finite Blaschke products.  For two members $(f_1,G_1),(f_2,G_2)\in{H_a}'$, we say that $(f_1,G_1)$ and $(f_2,G_2)$ are \textit{conformally equivalent}, and write $(f_1,G_1)\sim(f_2,G_2)$, if there is some bijective analytic map $\phi:G_1\to G_2$ such that $f_1\equiv f_2\circ\phi$ on $G_1$.
\end{definition}

It is easy to see that $\sim$ is an equivalence relation on ${H_a}'$, so we make the following definition.

\begin{definition}
Define $H_a$ to be the set of \textit{$\sim$-equivalence classes} of ${H_a}'$.
\end{definition}

Our proof of Theorem~\ref{thm: Analytic implies equiv to poly.} uses critically Theorem~\ref{thm: Polynomial critical level curve config existence.} that, given any possible critical level curve configuration of a generalized finite Blaschke product, there is a polynomial whose critical level curves (ie the level curves which contain critical points) are in that configuration.  In order to make rigorous this notion of a ``possible critical level curve configuration'' for a generalized finite Blaschke product, we will repeat a construction introduced in~\cite{Ri} of a set which parameterizes these configurations, and which we will call $PC_a$.  (In the original construction of this set, a larger set $PC$ was constructed which also contained the possible critical level curve configurations of more general meromorphic functions.  The $a$-subscript used throughout denotes that the functions whose critical level curve configurations we are capturing with the members of $PC_a$ are analytic on their respective domains.)  We begin by defining the basic building blocks of a member of $PC_a$, namely analytic level curve type sets.

\begin{definition}
A set $\Gamma\subset\mathbb{C}$ is said to be of \textit{analytic level curve type} if it is either a single point, a simple closed path, or a connected finite graph which satisfies the following requirements.

\begin{itemize}
\item
There are evenly many, and more than two, edges of $\Gamma$ incident to each vertex of $\Gamma$ (where we count an edge twice if both of its end points are at a given vertex).

\item
Each edge of $\Gamma$ is incident to the unbounded face of $\Gamma$.
\end{itemize}

Moreover, an analytic level curve type set which has vertices (ie is not a single point or a simple closed path) is said to be of \textit{analytic critical level curve type}.
\end{definition}

It is easy to see (using, for example, the maximum modulus theorem and the open mapping theorem) that any level curve of a generalized finite Blaschke product has the properties which define an analytic level curve type set.  Throughout this paper we will view two analytic level curve type sets $\Gamma_1$ and $\Gamma_2$ as equivalent if there is an orientation preserving homeomorphism $\phi:\mathbb{C}\to\mathbb{C}$ which maps $\Gamma_1$ to $\Gamma_2$.

We now define a set $P_a$ whose members will represent the individual zeros and critical level curves of a generalized finite Blaschke product.  As we describe the features and auxiliary data ascribed to a member of $P_a$ (and later of $PC_a$) we will parenthetically remark on the data for a level curve of a given generalized finite Blaschke product $(f,G)$ which those features and auxiliary data are meant to represent.

There are two types of members of $P_a$, namely those meant to represent zeros of $(f,G)$ (which we will call ``single point members of $P_a$'') and those meant to represent critical level curves of $(f,G)$ (which we will call ``graph members of $P_a$'').  We will begin by describing the single point members of $P_a$.

A single point member $\langle w\rangle_{P_a}$ of $P_a$ consists of the graph consisting of a single vertex $w$ with no edges, to which we add the following pieces of auxiliary data.

\begin{itemize}
\item
We define $H(\langle{w}\rangle_{P_a})\colonequals0$.  (This represents the value $|f|$ takes on $w$.)

\item
We define $Z(\langle{w}\rangle_{P_a})\colonequals k$ for some $k\in\{1,2,\ldots\}$.  (This represents the multiplicity of $w$ as a zero of $f$.)

\end{itemize}

The resulting object, with the above auxiliary data, we denote $\langle{w}\rangle_{P_a}$.

A graph member $\langle\lambda\rangle_{P_a}$ of $P_a$ consists of an analytic critical level curve type graph $\lambda$, to which we add the following pieces of auxiliary data.

\begin{itemize}
\item
We define $H(\langle\lambda\rangle_{P_a})=\epsilon$ for some value $\epsilon\in(0,\infty)$.  (This represents the value $|f|$ takes on $\lambda$.)

\item
For each bounded face $D$ of $\lambda$, we choose an integer $z(D)\in\{1,2,\ldots\}$.  (This represents the number of zeros of $f$ in $D$.)  If $D_1,D_2,\ldots,D_k$ denote the bounded faces of $\lambda$, we define $Z(\langle\lambda\rangle_{P_a})\colonequals\displaystyle\sum_{i=1}^kz(D_i)$.

\item
We distinguish a finite number of points in $\lambda$ in such a manner that for each bounded face $D$ of $\lambda$, there are $z(D)$ distinct distinguished points in $\partial{D}$.  (This represents the points in $\lambda$ at which $f$ takes non-negative real values).

\item
For each vertex $x$ of $\lambda$, we designate a value $a(x)\in[0,2\pi)$.  (This represents the argument of $f$ at $x$.)  We require that this assignment obeys the following rules.
	\begin{itemize}
	\item
$a(x)=0$ if and only if $x$ is a distinguished point of $\lambda$.

	\item
If $D$ is a bounded face of $\lambda$, and $x_1$ and $x_2$ are distinct vertices of $\lambda$ in $\partial{D}$ such that $a(x_1)\geq{a(x_2)}$, then there is some distinguished point $z\in\partial{D}$ such that $x_1,z,x_2$ is written in increasing order as they appear with respect to positive orientation around $\partial{D}$.  (This reflects the fact that since $f$ is analytic, $\arg(f)$ is increasing as $\partial{D}$ is traversed with positive orientation.)

\end{itemize}
\end{itemize}

The resulting object, with the above auxiliary data, we denote $\langle\lambda\rangle_{P_a}$, and we define $P_a$ to be the set of all such $\langle\lambda\rangle_{P_a}$ and $\langle{w}\rangle_{P_a}$.

Throughout this paper, $\langle{w}\rangle_{P_a}$ will be used to refer to single point members of $P_a$, $\langle\lambda\rangle_{P_a}$ will be used for graph members of $P_a$, and $\langle\xi\rangle_{P_a}$ will be used when we do not wish to distinguish between the two types of members of $P_a$.

Each member of $PC_a$ consists of a collection of members of $P_a$ arranged in different ways with respect to each other.  There are two aspects of this.  First, which graphs are in which bounded faces of which other graphs, and second, the rotational orientation of each graph with respect to the others.

We do this recursively.  To initialize our recursive construction, a level $0$ member of $PC_a$ will be a single point member $\langle w\rangle_{P_a}$ of $P_a$ viewed as a member of $PC_a$, with no additional data (now written $\langle w\rangle_{PC_a}$).

Let $n>0$ be given.  Choose some graph member $\langle\lambda\rangle_{P_a}$ of $P_a$.  We will now construct a level $n$ member $\langle\lambda\rangle_{PC_a}$ of $PC_a$ as follows.  We have two steps.

\begin{enumerate}
\item
For each bounded face $D$ of $\lambda$, we choose some level $k<n$ member $\langle\xi_D\rangle_{PC_a}$ of $PC_a$ and assign it to $D$.  These assignments must satisfy the following restrictions.

\begin{itemize}
\item
$Z(\langle\xi_D\rangle_{P_a})=z(D)$.  (This represents the fact that if $\lambda$ is a level curve of $f$ in $G$, then either $f$ has a single distinct zero in $D$, or all zeros of $f$ in $D$ are contained in the bounded faces of some single critical level curve $\xi_D$ of $f$ in $D$.  This fact was proved by the author in~\cite{Ri}, and will be discussed further at the end of this section.)

\item
$H(\langle\xi_D\rangle_{P_a})<H(\langle\lambda\rangle_{P_a})$.  (This follows for level curves of analytic functions in view of the maximum modulus theorem.)

\item
At least one of the $\langle\xi_D\rangle_{PC_a}$'s is a level $n-1$ member of $PC_a$  (This is to ensure that $\langle\lambda\rangle_{PC_a}$ was not constructed at any earlier recursive level.)
\end{itemize}

This determines recursively which graphs may lie in which bounded faces of which other graphs.

\item
For each bounded face $D$ of $\lambda$, we choose a map $g_D$ (which we will call a ``gradient map'') from the distinguished points of $\lambda$ in $\partial D$ to the distinguished points in $\xi_D$.  (In the context of level curves of an analytic function $f$, $g_D(y)=x$ means that $y$ and $x$ are connected by a gradient line of $f$.)  This map must satisfy the following restriction.

\begin{itemize}
\item
$g_D$ must preserve the orientation of the distinguished points.  That is, if $y^{(1)},\ldots,y^{(z(D))}$ are the distinguished points of $\lambda$ in $\partial D$ listed in order of their appearance about $\partial D$, then the order in which the critical points in $\xi_D$ appear around $\xi_D$ is exactly $g_D(y^{(1)}),\ldots,g_D(y^{(z(D))})$.  (This represents the fact that if $\lambda$ is a level curve of a function $f$ with bounded face $D$, and $\xi_D$ is the critical level curve of $f$ in $D$ which contains all the zeros of $f$ in its bounded faces, and $\widehat{D}$ denotes the portion of $D$ which is exterior to $\xi_D$, then the gradient lines of $f$ in $\widehat{D}$ cannot cross, since $\widehat{D}$ contains no critical points of $f$.)
\end{itemize}
\end{enumerate}

We let $\langle\lambda\rangle_{PC_a}$ denote the resulting object.  The collection of all such level $0$ objects $\langle w\rangle_{PC_a}$, and level $n>0$ objects $\langle\lambda\rangle_{PC_a}$, we call $PC_a$, and we call $PC_a$ the set of possible level curve configurations.

We adopt the same convention of $w$, $\lambda$ or $\xi$ for members of $PC_a$ as we did for members of $P_a$, namely that level $0$ members of $PC_a$ we denote by $\langle{w}\rangle_{PC_a}$, level $n>0$ members of $PC_a$ we denote by $\langle\lambda\rangle_{PC_a}$, and if we do not wish to specify the level of a member of $PC_a$ we will denote it by $\langle\xi\rangle_{PC_a}$.

\begin{example}
Following is a visual example of how a member of $PC_a$ is constructed.  We begin with a graph member $\langle\lambda\rangle_{P_a}$ of $P_a$ (here with vertex $z$ and bounded faces $D_1$ and $D_2$) which has auxiliary data such as $H(\langle\lambda\rangle_{P_a})=\frac{1}{2}$, $a(z)=\frac{\pi}{4}$, $z(D_1)=4$ and $z(D_2)=2$, and the marked distinguished points in $\lambda$.

\begin{figure}[H]
\centering
	\includegraphics[width=0.75\textwidth]{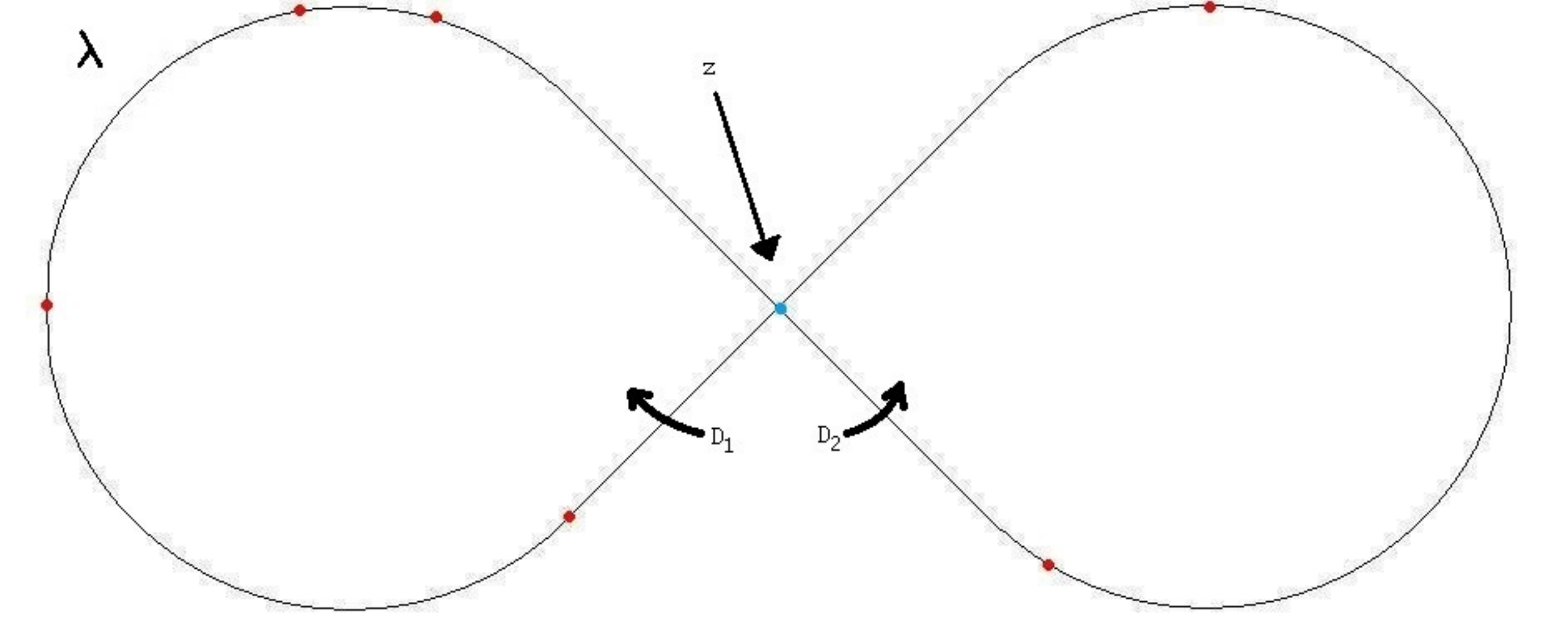}
           \caption{Member of $P_a$}
	\label{fig:MemberofPCStep1}

\end{figure}

We then assign a member of $PC_a$ to each face of $\lambda$.  (Dashed lines represent the action of the gradient maps.)

\begin{figure}[H]
\centering
	\includegraphics[width=0.75\textwidth]{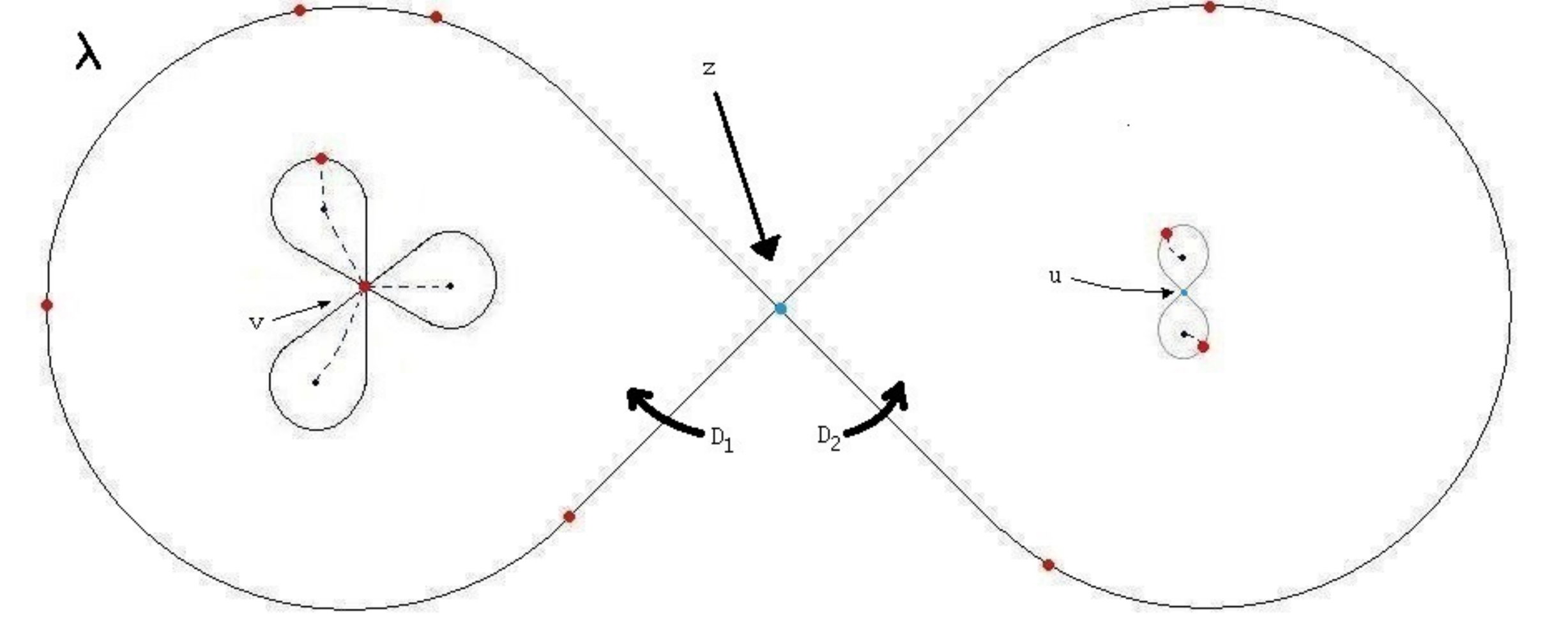}
           \caption{Assignment of members of $PC_a$ to the $D_1$ and $D_2$}
	\label{fig:MemberofPCStep2}

\end{figure}

Finally, we designate a gradient map from the distinguished points in $\lambda$ to the distinguished points in the assigned members of $PC_a$.

\begin{figure}[H]
\centering
	\includegraphics[width=0.75\textwidth]{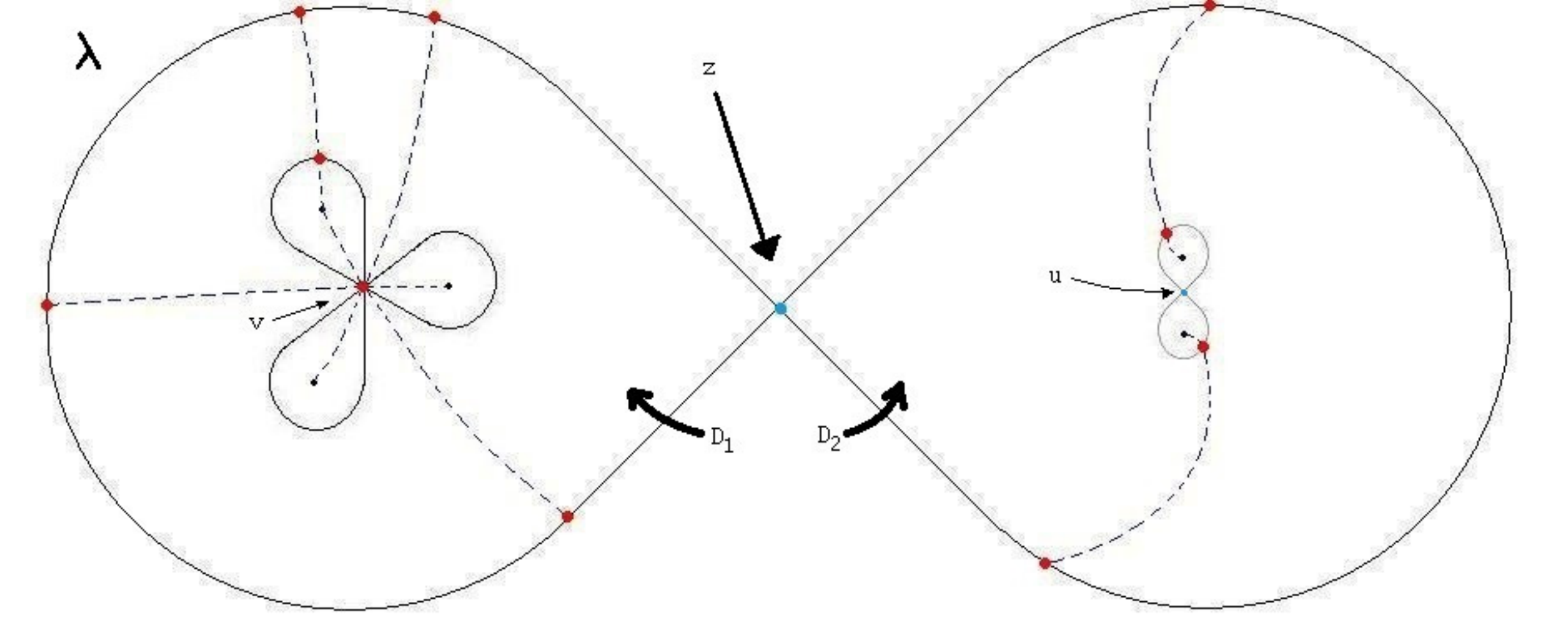}
           \caption{Designation of Gradient Maps}
	\label{fig:MemberofPCStep3}

\end{figure}

The resulting object we denote $\langle\lambda\rangle_{PC_a}$.

\end{example}

In~\cite{Ri}, the author proved the following Theorem~\ref{thm: Level curves separated by critical level curve.}.

\begin{theorem}\label{thm: Level curves separated by critical level curve.}
If $(f,G)$ is a generalized finite Blaschke product, and $L_1$ and $L_2$ are level curves of $f$ in $G$, each of which is in the unbounded face of the other, then there is a critical point $w\in G$ such that $L_1$ and $L_2$ are contained in different bounded faces of the level curve of $f$ containing $w$.
\end{theorem}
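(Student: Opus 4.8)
The plan is to argue by contradiction: suppose $L_1$ and $L_2$ are level curves of $f$ in $G$, each lying in the unbounded face of the other, and suppose that no critical point of $f$ separates them in the sense described. The key idea is that the region ``between'' $L_1$ and $L_2$ can then be foliated by level curves of $f$, none of which contains a critical point, and this forces $|f|$ to be monotone along every gradient line joining $L_1$ to $L_2$ — which will contradict the hypothesis that each curve lies in the unbounded face of the other.

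\textbf{Step 1 (Setting up the region between the curves).} First I would identify the relevant subregion of $G$. Since $L_1$ lies in the unbounded face of $L_2$ and vice versa, the bounded faces of $L_1$ and the bounded faces of $L_2$ are disjoint (a standard planar-topology fact, provable from the Jordan curve theorem applied to the outer boundary components of each $L_i$). Let $c_1=|f|$ on $L_1$ and $c_2=|f|$ on $L_2$; without loss of generality $c_1\le c_2$, and in fact we may assume $c_1<c_2$ since if $c_1=c_2$ a connectedness argument using the level set $\{|f|=c_1\}$ already does the job. Let $U$ be the connected component of $\{z\in G: c_1<|f(z)|<c_2\}$ whose boundary meets both $L_1$ and $L_2$; the existence of such a component follows because one can join a point of $L_1$ to a point of $L_2$ by a path in $G$ and follow the sublevel structure of $|f|$ along it.

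\textbf{Step 2 (No critical points in the closure of $U$, and structure of level curves).} Here is where the contradiction hypothesis is used. If $w\in G$ were a critical point with $c_1\le|f(w)|\le c_2$, then the level curve $\lambda_w$ of $f$ through $w$ would be of analytic critical level curve type, hence would have bounded faces; one checks (using that $L_1$ and $L_2$ are connected level sets and the nesting structure of level curves of a generalized finite Blaschke product) that $L_1$ and $L_2$ must lie in distinct bounded faces of $\lambda_w$, contradicting the assumption. Hence $f'\ne 0$ on the set $\{c_1\le|f|\le c_2\}\cap G$. Consequently, for every $c\in(c_1,c_2)$, each component of $\{|f|=c\}$ in $U$ is a smooth simple closed curve, and the gradient flow of $\log|f|$ (equivalently the flow along $\overline{f'/f}$, which is well defined and nonvanishing there since $f$ has no zeros in this range either) gives a diffeomorphism between any two such level curves that flow to one another.

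\textbf{Step 3 (Deriving the contradiction).} Pick a gradient line of $|f|$ starting on $L_1$. Since $f'\ne 0$ throughout the closure of $U$, $|f|$ is strictly increasing along this line as long as it stays in $U$, so it cannot return to $L_1$ and cannot terminate at an interior point; it must exit through the part of $\partial U$ where $|f|=c_2$, i.e. it reaches $L_2$. Thus $L_1$ and $L_2$ are joined by gradient lines, and running the flow for all of $(c_1,c_2)$ shows that $U$ is an annular region with $L_1$ and $L_2$ as its two boundary components — in particular $L_1$ bounds a face (the ``$L_2$ side'' of $U$, together with everything beyond it) that contains $L_2$, so $L_2$ is \emph{not} in the unbounded face of $L_1$. (Here one must be slightly careful that $L_1$ and $L_2$ might each be nontrivial graphs rather than simple closed curves, but since they are level curves at the regular values $c_1,c_2$ away from all critical points, each relevant boundary arc is in fact smooth; if $c_1$ or $c_2$ is a critical value, replace it by a nearby regular value, producing curves $L_1',L_2'$ still mutually exterior, and the same contradiction ensues, then note the original $L_i$ are limits.) This contradicts the hypothesis and completes the proof.

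\textbf{Main obstacle.} I expect the delicate point to be Step 2 — precisely pinning down why the \emph{absence} of a separating critical point forces $f'$ to be nonzero on \emph{all} of $\{c_1\le|f|\le c_2\}$, rather than merely on the single intermediate level. This requires understanding how critical level curves nest and how their bounded faces partition $G$ relative to $L_1$ and $L_2$; one likely wants a lemma saying that if $\lambda$ is a critical level curve of $f$ at level $c$, then any two level curves at levels $<c$ and $>c$ respectively that are ``on opposite sides'' must be separated by $\lambda$ in the stated sense, which is essentially a restatement of what must be shown but localized to one critical level. Carefully handling the case where $L_1$ or $L_2$ itself lies at a critical value (so $L_1$ or $L_2$ is a genuine graph) is the other technical wrinkle, to be dispatched by the regular-value perturbation mentioned above.
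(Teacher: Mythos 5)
The paper does not actually prove this theorem --- it is imported from~\cite{Ri} --- so there is no in-paper argument to compare yours against; judged on its own terms, your proposal has a genuine gap at its logical core, and it is the one you yourself flag at the end. In Step~2 you pass from the contradiction hypothesis (no critical point whose level curve has $L_1$ and $L_2$ in distinct bounded faces) to the much stronger statement that $f'\neq 0$ on all of $\{c_1\le|f|\le c_2\}\cap G$, via the claim that any critical point $w$ at an intermediate level would automatically have $L_1$ and $L_2$ in distinct bounded faces of $\lambda_w$. That claim is false: $f$ may have critical points at levels in $[c_1,c_2]$ whose level curves separate other pairs of zeros and leave both $L_1$ and $L_2$ in their unbounded face (model $f$ on a polynomial with three well-separated zeros, let $L_1,L_2$ surround the first two, and consider the saddle between the second and third zeros). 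So ``no separating critical point'' does not yield ``no critical point in the intermediate band,'' and the gradient-flow argument of Step~3 never gets off the ground. Restricting attention to critical points in $\overline{U}$ does not obviously repair this, because you would then have to show that a critical point in $\overline{U}$ necessarily separates $L_1$ from $L_2$, which is essentially the theorem again.

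Two further problems. First, the component $U$ of $\{c_1<|f|<c_2\}$ whose boundary meets both $L_1$ and $L_2$ need not exist: in the three-zero polynomial picture, $\{c_1<|p|<c_2\}$ can split into annuli around the separate zeros, none of which touches both curves, and ``follow the sublevel structure along a path'' does not manufacture one. Second, the case $c_1=c_2$ is not disposed of by a connectedness argument on $\{|f|=c_1\}$: there $L_1$ and $L_2$ are distinct components of that level set and the separating critical point must be found at a strictly higher level, so this case carries the full difficulty of the theorem. The workable route (and, as far as I can tell, the one in~\cite{Ri}) is a merging argument on sublevel sets: each bounded face of a level curve is a component of $\Omega_t=\{z\in G:|f(z)|<t\}$ for appropriate $t$, the components of $\Omega_t$ grow and eventually coalesce as $t\nearrow 1$ since $G$ is connected, and if $t^*$ is the infimum of those $t$ for which $L_1$ and $L_2$ lie in a single component of $\Omega_t$, then the two components containing them merge at $t^*$ at a critical point $w$, with the two merging components sitting in distinct bounded faces of the level curve through $w$. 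I would rebuild the proof around that merging argument rather than around a foliated region between the two curves.
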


Theorem~\ref{thm: Level curves separated by critical level curve.} implies that if $\lambda$ is a level curve of a generalized finite Blaschke product $(f,G)$, and $D$ is a bounded face of $\lambda$, then either $D$ contains a single distinct zero $z$ of $f$, or there is some unique critical level curve $\lambda_D$ of $f$ in $D$ such that each zero and critical point of $f$ in $D$ is either in $\lambda_D$, or in one of the bounded faces of $\lambda_D$ (in the latter case we call $\lambda_D$ the maximal critical level curve of $f$ in $D$).  If $D$ contains a single distinct zero of $f$, then each level curve of $f$ in $D$ consists of simple smooth closed curve containing $z$ in its bounded face.  If $f$ has a maximal critical level curve $\lambda_D$ in $D$, then each level curve of $f$ in $D$ which is exterior to $\lambda_D$ consists of a simple smooth closed curve containing $\lambda_D$ in its bounded face.  In view of these facts which are implied by Theorem~\ref{thm: Level curves separated by critical level curve.}, we conclude that the critical level curves of a generalized finite Blaschke product $(f,G)$ form a member of $PC_a$, which we will denote by $\Pi(f,G)$.

It was further shown in~\cite{Ri} that the map $\Pi$ respects conformal equivalence of generalized finite Blaschke products, as described below in Theorem~\ref{thm: Conformal equivalence same as topological equivalence.}.

\begin{theorem}\label{thm: Conformal equivalence same as topological equivalence.}
For any generalized finite Blaschke products $(f_1,G_1)$ and $(f_2,G_2)$, $(f_1,G_1)\sim(f_2,G_2)$ if and only if $\Pi(f_1,G_1)=\Pi(f_2,G_2)$.
\end{theorem}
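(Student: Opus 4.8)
The plan is to prove the two implications separately. The forward implication is a matter of checking that a conformal equivalence transports every datum that enters the definition of $\Pi$. Indeed, suppose $\phi:G_1\to G_2$ is a conformal bijection with $f_1\equiv f_2\circ\phi$. Then $|f_1|=|f_2|\circ\phi$, so $\phi$ carries each level curve of $f_1$ onto a level curve of $f_2$ at the same height; since $\phi'\neq0$ it preserves critical points and their orders, so it maps critical level curves to critical level curves with matching valence at each vertex, and it preserves bounded faces, their nesting, and (conformal maps preserving zero multiplicities) the zero count of every face. Since also $\arg f_1=\arg f_2\circ\phi$, $\phi$ sends the points of a level curve where $f_1\geq0$ to those of its image where $f_2\geq0$ (the distinguished points), sends a vertex at which $\arg f_1=a$ to one at which $\arg f_2=a$ (so the values $a(x)$ are preserved), and sends the gradient lines of $f_1$ (the loci $\arg f_1\equiv c$) to those of $f_2$ (so the gradient maps $g_D$ are preserved). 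Extending $\phi$ to an orientation-preserving homeomorphism of $\widehat{\mathbb C}$ gives an identification witnessing $\Pi(f_1,G_1)=\Pi(f_2,G_2)$.

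For the reverse implication the idea is to reconstruct $(f_i,G_i)$ up to conformal equivalence from $\Pi(f_i,G_i)$, organised as an induction on the level $n$ of the common value $\langle\xi\rangle_{PC_a}:=\Pi(f_1,G_1)=\Pi(f_2,G_2)$ in $PC_a$. If $n=0$, then each $f_i$ has a single distinct zero in $G_i$, of the common multiplicity $k=Z(\langle\xi\rangle_{PC_a})$; since $|f_i|=1$ and $f_i'\neq0$ on $\partial G_i$, the boundary $\partial G_i$ is a real-analytic Jordan curve, and composing $f_i$ with a Riemann map $\rho_i:\mathbb D\to G_i$ (which extends analytically across $\mathbb T$) produces a finite Blaschke product with a single zero of multiplicity $k$, i.e.\ the $k$-th power $M_i^{\,k}$ of a M\"obius automorphism $M_i$ of $\mathbb D$; then $f_i\circ(\rho_i\circ M_i^{-1})=z^k$, so $(f_1,G_1)\sim(z^k,\mathbb D)\sim(f_2,G_2)$.

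Now let $n>0$, and let $\mu^{(i)}$ be the outermost critical level curve of $f_i$ in $G_i$; by Theorem~\ref{thm: Level curves separated by critical level curve.} and the structural facts recorded after it, $\mu^{(i)}$ exists, its type is the top graph $\xi$ of $\langle\xi\rangle_{PC_a}$, and each bounded face $D^{(i)}_j$ of $\mu^{(i)}$ ($j=1,\dots,m$) either contains a single distinct zero of $f_i$ or contains a unique maximal critical level curve strictly inside it. Put $\epsilon=H(\langle\xi\rangle_{P_a})$ and fix a non-critical level $\epsilon'\in(0,\epsilon)$ lying strictly above every critical value of $f_1$ and of $f_2$ of modulus $<\epsilon$ (these heights are recorded in $\Pi$, hence the same for $f_1$ and $f_2$). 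For each $j$ set $F^{(i)}_j=\{z\in D^{(i)}_j:|f_i(z)|<\epsilon'\}$; by the cited structure this is a real-analytic Jordan domain, $(f_i|_{F^{(i)}_j},F^{(i)}_j)$ is a generalized finite Blaschke product, and its critical level curve configuration is exactly the member $\langle\xi_{D_j}\rangle_{PC_a}$ of level $<n$ that $\langle\xi\rangle_{PC_a}$ assigns to its $j$-th face, so by the inductive hypothesis there is a conformal bijection $\phi_j:F^{(1)}_j\to F^{(2)}_j$ with $f_1\equiv f_2\circ\phi_j$. On the complementary ``shells'' — the region between $\partial G_i$ and $\mu^{(i)}$, and each collar $A^{(i)}_j=D^{(i)}_j\setminus\overline{F^{(i)}_j}$ between $\mu^{(i)}$ and $\partial F^{(i)}_j$ — the map $f_i$ has no critical points, and (again by the structure result) each shell is a topological annulus on which $f_i$ is a holomorphic covering of the round annulus $\{\epsilon<|w|<1\}$ of degree $\sum_j z(D_j)$, respectively of $\{\epsilon'<|w|<\epsilon\}$ of degree $z(D_j)$. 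Since a connected holomorphic covering of an annulus of a given degree is unique up to isomorphism, each shell of $G_1$ admits a conformal isomorphism onto the corresponding shell of $G_2$ intertwining $f_1$ and $f_2$, unique up to a deck transformation (a finite cyclic group of rotations). Using the distinguished points on $\mu^{(i)}$ and on the $\partial F^{(i)}_j$ and the gradient maps $g_{D_j}$ in $\langle\xi\rangle_{PC_a}$ — which are the same for $f_1$ and $f_2$ — I would choose these shell isomorphisms, together with a deck-transformation adjustment of each $\phi_j$, so that all the pieces agree along their common boundaries (the arcs of $\mu^{(i)}$ and the curves $\partial F^{(i)}_j$). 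Gluing them yields a homeomorphism $\phi:\overline{G_1}\to\overline{G_2}$ with $f_1\equiv f_2\circ\phi$ that is holomorphic off $\mu^{(1)}\cup\bigcup_j\partial F^{(1)}_j$; since $\phi$ is bounded and continuous it extends holomorphically across the real-analytic arcs of that set and (an isolated singularity) across the vertices of $\mu^{(1)}$, completing the induction.

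The genuinely delicate step is the regluing along the outermost critical level curve $\mu^{(i)}$. The shells carry $f_i$ only as explicit power-type coverings, hence are rigid just up to a finite cyclic deck group, so the heart of the inductive step is showing that the combinatorial data in $PC_a$ fixes all of these rotations consistently: the vertex arguments $a(x)$ pin down the cyclic positions of the branch values of $f_i$ on $\{|w|=\epsilon\}$ and so the way the outer shell attaches to the collars, while the distinguished points and the gradient maps $g_{D_j}$ pin down the residual rotation of each collar and couple it to the inner piece $F^{(i)}_j$. What must be checked is that these normalizations can be imposed simultaneously — in particular that the deck-transformation discrepancy between $\phi_j|_{\partial F^{(1)}_j}$ and the boundary map forced by the collar always lies in the subgroup of $\mathrm{Deck}(f_1|_{F^{(1)}_j})$ that extends inward — and it is here that one uses both the hypothesis that $\Pi(f_1,G_1)$ and $\Pi(f_2,G_2)$ are literally equal members of $PC_a$ and the compatibility rules built into the definition of $PC_a$ (the constraint linking $a(x)$ to the cyclic order of the distinguished points around each face, and the orientation requirement on the $g_D$).
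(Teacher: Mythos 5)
The paper does not actually prove this theorem: it imports it from~\cite{Ri} with only the one-line gloss in Section~1.4 (``any two such ratios whose critical level curve configurations are represented by the same member of $PC$ are conformally equivalent''), so there is no in-text argument to compare yours against line by line. Judged on its own terms, your forward implication is complete and correct: since $f_1=f_2\circ\phi$ literally, every datum entering $\Pi$ ($H$, $Z$, $z(D)$, the distinguished points, the $a(x)$, the gradient maps) is transported, and Carath\'eodory plus Schoenflies give the required orientation-preserving plane homeomorphism. Your reverse implication has the right architecture, and it is the architecture one would expect from the level-curve philosophy: induct on the level, cut $G_i$ along the maximal critical level curve and along regular sublevel curves, observe that the resulting shells are unbranched coverings of round annuli (hence rigid up to a finite cyclic deck group), and use the combinatorial data of $PC_a$ to pin the rotations.

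The gap is that the decisive step is announced rather than carried out, and it is precisely the content of the theorem. You write that you ``would choose these shell isomorphisms \dots so that all the pieces agree'' and that ``what must be checked is that these normalizations can be imposed simultaneously''; but each collar has only a single cyclic rotation of freedom and must satisfy two matching conditions at once (against the outer shell along $\mu^{(1)}$, and against $\phi_j$ along $\partial F_j^{(1)}$), while $\phi_j$ itself is rigid up to only the (possibly trivial) deck group of $f_1|_{F_j^{(1)}}$. Resolving this requires a stronger inductive hypothesis than the one you state: not merely that $F_j^{(1)}$ and $F_j^{(2)}$ are conformally equivalent, but that a conformal equivalence exists realizing the \emph{specific} identification of distinguished points demanded by the given identification of the two configurations (equivalently, an equivariance statement tracking where each distinguished point of $\xi_{D_j}$ is sent). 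Without that, the collar may be forced into a cyclic shift that no admissible $\phi_j$ can absorb, and the glue fails. Two smaller loose ends: $(f_i|_{F_j^{(i)}},F_j^{(i)})$ has $|f_i|=\epsilon'$ on its boundary, not $1$, so you must rescale by $\epsilon'$ and note that this rescales both configurations identically before invoking the inductive hypothesis; and the continuity-plus-removability argument at the vertices of $\mu^{(1)}$ should be stated as continuity of $\phi$ on all of $\overline{G_1}$ followed by Riemann's removable singularity theorem, since Schwarz reflection alone only handles the open analytic edges.
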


Note that Theorem~\ref{thm: Conformal equivalence same as topological equivalence.} shows that, on the one hand, $\Pi$ viewed as acting on the $\sim$-equivalence classes of generalized finite Blaschke products in $H_a$ is well defined and, on the other hand, $\Pi:H_a\to PC_a$ is injective.  It was also shown in~\cite{Ri} that each member of $PC_a$ represents the critical level curve configuration for some polynomial as follows.

\begin{theorem}\label{thm: Polynomial critical level curve config existence.}
Given any critical level curve configuration $\langle\xi\rangle_{PC_a}\in PC_a$, there is a polynomial in $\mathbb{C}[z]$ whose critical level curves form the configuration $\langle\xi\rangle_{PC_a}$.
\end{theorem}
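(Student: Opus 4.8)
The plan is to construct the polynomial explicitly, working inductively on the recursive level of the given configuration $\langle\xi\rangle_{PC_a}$, with the base case being a single point member, for which the monomial $z^k$ (where $k = Z(\langle w\rangle_{P_a})$) has the trivial critical level curve configuration. For the inductive step, suppose $\langle\lambda\rangle_{PC_a}$ has level $n > 0$, with graph $\lambda$, bounded faces $D_1,\dots,D_m$, and assigned lower-level members $\langle\xi_{D_i}\rangle_{PC_a}$ with gradient maps $g_{D_i}$. By induction we may assume each $\langle\xi_{D_i}\rangle_{PC_a}$ is realized by a polynomial $p_i$, which we may normalize (by precomposing with an affine map and postcomposing with a scaling) so that the relevant ``outermost'' level curve of $p_i$ is exactly $\partial\mathbb{D}$, and so that $|p_i| < H(\langle\lambda\rangle_{P_a})$ on $cl(\mathbb{D})$ with the distinguished points of $\xi_{D_i}$ sitting where $p_i$ is real and nonnegative on $\partial\mathbb{D}$.

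The heart of the argument is then an analytic gluing step. First I would build a generalized finite Blaschke product $(F,G)$ whose critical level curve configuration is exactly $\langle\lambda\rangle_{PC_a}$, by assembling it from the pieces: take disjoint Jordan domains conformally carrying copies of the normalized $p_i$ on their ``interiors'' (the bounded faces of $\partial\mathbb{D}$ under $p_i$), and weld them together along a realization of the graph $\lambda$ so that the gradient lines match up as prescribed by the $g_{D_i}$ and so that $|F| = H(\langle\lambda\rangle_{P_a})$ on an appropriate curve. The combinatorial constraints imposed on members of $P_a$ and $PC_a$ — the parity/valence conditions on $\lambda$, the orientation-preservation of the $a(x)$ assignment around each face, and the orientation-preservation of the gradient maps — are precisely what make this welding consistent; once the welded function $F$ is defined on a Jordan domain $G$ with $|F| = 1$ on $\partial G$ (after a final scaling) and $F' \neq 0$ there, it is a generalized finite Blaschke product by the remark following the definition, and by construction $\Pi(F,G) = \langle\lambda\rangle_{PC_a}$.

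Having produced a generalized finite Blaschke product realizing the configuration, I would invoke Theorem~\ref{thm: Finite Blaschke product-polynomial conformal equivalence.} (equivalently Theorem~\ref{thm: GfBp equiv to a polynomial.}): pull $(F,G)$ back to $\mathbb{D}$ by a Riemann map to get a finite Blaschke product, which has a polynomial conformal model $p$ via an injective analytic $\phi$. Since $\phi$ is a homeomorphism onto its image, it carries the critical level curves of $F$ to those of $p$ while preserving all the incidence, rotational, and gradient-line data; hence $\Pi$ of the polynomial $p$ equals $\langle\lambda\rangle_{PC_a}$, as desired. The main obstacle is the welding/assembly step: one must check carefully that the local models glue to a \emph{single-valued} analytic function with no spurious critical points outside the prescribed graph, and that the resulting level curve configuration is exactly $\langle\lambda\rangle_{PC_a}$ and not some refinement or degeneration of it — this is where the auxiliary-data constraints in the definition of $PC_a$ must be used in full, and where the bulk of the technical work lies.
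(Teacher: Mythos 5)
A preliminary remark: this paper does not actually prove Theorem~\ref{thm: Polynomial critical level curve config existence.} --- it imports it from~\cite{Ri} --- so your proposal has to stand entirely on its own. As written it does not, for two reasons. The first is a circularity in the logical structure: you realize the configuration by a generalized finite Blaschke product $(F,G)$ and then invoke Theorem~\ref{thm: Finite Blaschke product-polynomial conformal equivalence.} to replace $F$ by a polynomial. But in this paper Theorem~\ref{thm: Finite Blaschke product-polynomial conformal equivalence.} is obtained by combining Theorem~\ref{thm: Conformal equivalence same as topological equivalence.} with Theorem~\ref{thm: Polynomial critical level curve config existence.} --- the very statement you are proving. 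To break the circle you would have to ground that last step in one of the independent proofs (Ebenfelt--Khavinson--Shapiro or Younsi), and specifically in a degree-preserving one: if the polynomial model has degree larger than $Z(\langle\lambda\rangle_{P_a})$ (as in the Lowther--Speyer/Runge approach), it acquires critical points outside $\phi(G)$, and its critical level curve configuration is then a strict enlargement of $\langle\lambda\rangle_{PC_a}$ rather than equal to it. The proposal does not acknowledge either issue.

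Second, and more fundamentally, the welding step \emph{is} the theorem, and you have only asserted it. Producing a single-valued analytic $F$ on a Jordan domain $G$ with $|F|=1$ on $\partial G$ whose critical level curves form exactly the prescribed graph requires: (a) constructing the outer piece, a degree-$Z(\langle\lambda\rangle_{P_a})$ unbranched proper map from the doubly connected region between $\lambda$ and $\partial G$ onto an annulus, whose boundary behavior on $\lambda$ (a graph with vertices, not a Jordan curve) is compatible with the vertex valences and the assigned arguments $a(x)$; and (b) matching, along each $\partial D_i$, the boundary parameterization induced by the normalized $p_i$ with the one induced by the outer piece, consistently with the gradient maps $g_{D_i}$. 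These two parameterizations will not agree for arbitrary choices of Riemann maps; reconciling them is a genuine conformal welding / uniformization problem (handled in practice by building the branched cover abstractly and uniformizing, or by quasiconformal surgery), and it is precisely the point at which one must show the combinatorial axioms of $PC_a$ are \emph{sufficient} and not merely necessary. Saying that the constraints ``are precisely what make this welding consistent'' names the obstacle without overcoming it; until that gluing is carried out, the induction has no content beyond the base case.
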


Combining Theorem~\ref{thm: Conformal equivalence same as topological equivalence.} with Theorem~\ref{thm: Polynomial critical level curve config existence.}, we obtain the fact that each generalized finite Blaschke product is conformally equivalent to a polynomial.

\begin{theorem}\label{thm: Finite Blaschke product-polynomial conformal equivalence.}
Given any generalized finite Blaschke product $(f,G)$, there is a polynomial $p\in\mathbb{C}[z]$ and an injective analytic map $\phi:G\to\mathbb{C}$ such that $f\equiv p\circ\phi$ on $G$.
\end{theorem}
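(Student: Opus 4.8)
The plan is to obtain Theorem~\ref{thm: Finite Blaschke product-polynomial conformal equivalence.} as an essentially immediate consequence of the two results just stated, Theorem~\ref{thm: Conformal equivalence same as topological equivalence.} and Theorem~\ref{thm: Polynomial critical level curve config existence.}. Given $(f,G)$, I would produce a polynomial $p$ and a Jordan domain $G'$ such that $(p,G')$ is itself a generalized finite Blaschke product and $\Pi(p,G')=\Pi(f,G)$; the conformal equivalence then handed to us by Theorem~\ref{thm: Conformal equivalence same as topological equivalence.} is exactly the desired map $\phi$.

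First I would set $\Theta \colonequals \Pi(f,G)\in PC_a$ and record the elementary observation that every graph member of $P_a$ appearing in $\Theta$ has $H$-value strictly less than $1$. Indeed $f$ is nonconstant (otherwise $f'\equiv 0$, contradicting $f'\neq 0$ on the nonempty set $\partial G$), so the maximum modulus theorem forces $|f|<1$ throughout $G$; in particular $|f(w)|<1$ at each critical point $w$ of $f$ in $G$, and $|f(w)|$ is precisely the $H$-value of the critical level curve of $f$ through $w$. Next, I would invoke Theorem~\ref{thm: Polynomial critical level curve config existence.} to choose a polynomial $p\in\mathbb{C}[z]$ whose critical level curves form the configuration $\Theta$. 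Since $p$ realizes $\Theta$, each critical point of $p$ is either a zero of $p$ (where $|p|=0$) or lies on a critical level curve of $p$ (where $|p|$ equals one of the $H$-values recorded by $\Theta$); in both cases $|p|<1$ at that point.

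Now take $G' \colonequals \{\,z\in\mathbb{C} : |p(z)|<1\,\}$. Because the level $1$ exceeds $|p|$ at every critical point of $p$, the smooth proper function $|p|^{2}$ has no critical values in $[1,\infty)$, so $cl(G')$ is a closed topological disk with smooth boundary $\{|p|=1\}$; this is the standard fact that a polynomial lemniscate at a level above all critical values is a single Jordan curve enclosing all the zeros and critical points of the polynomial. Hence $G'$ is a Jordan domain, $p$ is analytic on $cl(G')$, $|p|=1$ on $\partial G'$, and $p'\neq 0$ on $\partial G'$, so $(p,G')$ is a generalized finite Blaschke product. Furthermore each critical level curve of $p$ is contained in some level set $\{|p|=\varepsilon\}$ with $\varepsilon<1$ and therefore lies inside $G'$, so the critical level curves of $p$, together with their nesting and gradient data as seen inside $G'$, are exactly those encoded by $\Theta$; that is, $\Pi(p,G')=\Theta=\Pi(f,G)$. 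Theorem~\ref{thm: Conformal equivalence same as topological equivalence.} then yields a bijective analytic map $\phi:G\to G'$ with $f\equiv p\circ\phi$ on $G$, and composing $\phi$ with the inclusion $G'\hookrightarrow\mathbb{C}$ finishes the proof.

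Essentially all of the mathematical content is already inside Theorems~\ref{thm: Conformal equivalence same as topological equivalence.} and~\ref{thm: Polynomial critical level curve config existence.}, so the only things I expect to require care are the normalization and the identification of configurations. Using the cutoff level exactly $1$ (rather than some large $M$, which would force an extra rescaling of $p$) is legitimate precisely because of the $H<1$ observation, and it is what makes $\Pi(p,G')$ come out equal to $\Theta$ on the nose. The one genuinely fiddly point should be verifying that $G'$ "sees" the whole configuration $\Theta$ and nothing more, including the degenerate case in which $\Theta$ is a single-point member of $PC_a$ and $p$ is a constant multiple of $(z-w_0)^{k}$.
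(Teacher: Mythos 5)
Your proposal is correct and follows exactly the route the paper takes: the paper derives Theorem~\ref{thm: Finite Blaschke product-polynomial conformal equivalence.} in one line by combining Theorem~\ref{thm: Conformal equivalence same as topological equivalence.} with Theorem~\ref{thm: Polynomial critical level curve config existence.}, which is precisely your argument. Your additional care about normalizing the lemniscate level to $1$ (justified by the maximum modulus observation that all $H$-values in $\Pi(f,G)$ are below $1$) just makes explicit the bookkeeping the paper leaves implicit.
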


In the next section we will give the proof of Theorem~\ref{thm: Analytic implies equiv to poly.}.
\section{PROOF OF THEOREM \ref{thm: Analytic implies equiv to poly.}}\label{sect: Proof of main result.}

We now proceed to the proof of our main result.

\begin{theorem}\label{thm: Analytic implies equiv to poly.}
If $D\subset\mathbb{C}$ is open and bounded, and $f$ is meromorphic on $cl(D)$, then there is an injective analytic map $\phi:D\to\mathbb{C}$, and a rational function $r\in\mathbb{C}(z)$ such that $f\equiv r\circ\phi$ on $D$.
\end{theorem}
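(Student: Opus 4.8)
The plan is to carry out the level-curve programme of Section~\ref{sect: Construction of PC_a.} for a general analytic $f$, reducing as much as possible to Theorem~\ref{thm: GfBp equiv to a polynomial.} (the ``bounded face of a level curve'' case). A couple of harmless reductions first: if $f\equiv0$ take $p=0$, so assume $f\not\equiv0$; and since a polynomial conformal model on a Jordan domain $D_1\supset cl(D)$ restricts to one on $D$, enlarge $D$ slightly inside the domain of analyticity $G$ of $f$ so that $f$ has no zero on $\partial D$, and put $c_0\colonequals\min_{\partial D}|f|>0$.

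Suppose first there is a regular value $c$ of $|f|$ with $0<c<c_0$ lying above every positive critical value of $f$ on $cl(D)$. Then $V\colonequals\{z\in D:|f(z)|<c\}$ is compactly contained in $D$, its boundary lies in $\{|f|=c\}$, and two maximum-principle observations --- a zero-free component of $V$ would make $\log|f|$ a nonconstant harmonic function constant on its boundary, and $c$ exceeds the saddle values separating the zeros of $f$ --- force $V$ to be a single Jordan subdomain $G_0$, which is a bounded face of the level curve $\lambda=\partial G_0$ and which contains every zero and critical point of $f$ in $D$. Theorem~\ref{thm: GfBp equiv to a polynomial.} applied to $G_0$ yields an injective analytic $\phi_0:G_0\to D'$ onto a Jordan domain and a polynomial $p$, of degree $k=$ (the number of zeros of $f$ in $D$), with $f\equiv p\circ\phi_0$ on $G_0$. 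The purpose of having enclosed the critical points is that a Riemann--Hurwitz count for the degree-$k$ branched cover $f/c:G_0\to\mathbb{D}$ produces exactly $k-1$ critical points of $f$ in $G_0$, hence exactly $k-1$ critical points of $p$ in $D'$; since $\deg p=k$, \emph{every} critical point of $p$ lies in $D'$.

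It remains to continue $\phi_0$ over $A\colonequals D\setminus cl(G_0)$, an annulus on which $f$ omits $0$ and has no critical point. Locally $\phi_0$ continues as a branch of $p^{-1}\circ f$; since every critical value of $p$ lies in $\{|w|<c\}$ while $f(A)\subset\{|w|\ge c\}$, no branch point is ever encountered and the continuation runs over all of $A$. Its monodromy around $A$ coincides with that of the unbranched, connected, degree-$k$ covering $p:\mathbb{C}\setminus cl(D')\to\{|w|>c\}$ along the image of a core loop of $A$, which winds $k$ times about $0$ by the argument principle; this monodromy is thus the $k$-th power of a $k$-cycle in $S_k$, i.e.\ the identity, so $\phi_0$ extends to a single-valued analytic $\phi:D\to\mathbb{C}$ with $f\equiv p\circ\phi$. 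Finally, comparing $f|_A$ and $p|_{\mathbb{C}\setminus cl(D')}$ as degree-$k$ coverings of the image annulus shows $\phi$ carries $A$ injectively onto a region disjoint from $cl(D')=\phi(cl(G_0))$, so $\phi$ is injective on all of $D$, and this case is done.

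The main obstacle is the complementary case, in which every regular value $c<c_0$ lies below some positive critical value of $f$: a critical point of $f$ then sits too near $\partial D$ to be caught inside any level curve of $f$, and --- unlike a polynomial, whose high lemniscates are single Jordan curves --- the high level curves of $f$ need not close up at all, so one cannot instead engulf $cl(D)$ in a bounded face of a level curve within $G$. This is precisely the phenomenon of \emph{critical level curves cut by $\partial D$}, which never occurs for generalized finite Blaschke products and which is the genuine new difficulty here. I expect to handle it by distilling from $f$ on $D$ a ``partial'' member of $PC_a$: peel off an innermost level-curve face containing all the zeros, then record the critical level curve graphs and gradient-line combinatorics of $f$ in $D$ --- including the arcs along which level curves leave $D$ --- and match the enclosed data to a polynomial $p$ furnished by Theorem~\ref{thm: Polynomial critical level curve config existence.}, identifying $(f,D)$ with $p$ restricted to a simply connected domain that avoids the remaining critical points of $p$, with the conformal equivalence forced by the same rigidity that underlies Theorem~\ref{thm: Conformal equivalence same as topological equivalence.}. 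Making this ``partial configuration'' bookkeeping precise, and upgrading the equivalence theorem to cover it, is the delicate part.
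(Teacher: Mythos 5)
Your Case~1 is a reasonable variant of the paper's easy case (the paper instead enlarges $D$ outward until its boundary is a single level curve and invokes Theorem~\ref{thm: Finite Blaschke product-polynomial conformal equivalence.}, whereas you shrink inward to a level-curve face and re-expand by analytic continuation), but the dichotomy itself is set up incorrectly: the existence of a regular value $c<c_0$ exceeding every positive critical value of $f$ \emph{on $cl(D)$} does not force $V=\{z\in D:|f(z)|<c\}$ to be connected, because the saddle separating two zeros may lie outside $cl(D)$ while both zeros lie inside $D$. Take $f(z)=z^2-1$ and $D$ a thin U-shaped Jordan domain containing $\pm1$ but not the critical point $0$: there are no critical points of $f$ in $cl(D)$ at all, so every regular $c<c_0$ satisfies your hypothesis, yet $V$ has two components and your single Jordan domain $G_0$ does not exist. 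This is precisely the ``implied critical point'' phenomenon the paper's proof is built to manufacture, and it shows your Case~2 cannot be avoided even when no critical level curve of $f$ is literally cut by $\partial D$.

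The decisive gap, however, is that Case~2 --- which you yourself call ``the genuine new difficulty'' and ``the delicate part'' --- is offered only as a plan, not a proof, and it is where essentially all of the paper's argument lives. The paper enlarges $D$ to a domain $D_2$ whose boundary alternates between level-curve and gradient-line segments of $f$, extends each relevant level curve outside $D_2$ by ``extension paths'' carrying a recursively defined argument increment $\Delta_{\arg}(\gamma)$, inserts implied zeros (one of multiplicity $n$ in each face $F$ with $\Delta_{\arg}(\partial F)=2\pi n>0$) and implied critical points (by colliding extended level curves outside $cl(D_2)$), verifies that the resulting collection $\mathbb{A}$ is an honest member of $PC_a$, and only then applies Theorem~\ref{thm: Polynomial critical level curve config existence.} and glues $f$ to the resulting polynomial on polar rectangles via Schwarz reflection. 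In particular the paper never needs a ``partial member of $PC_a$'' or an upgraded Theorem~\ref{thm: Conformal equivalence same as topological equivalence.}: it completes the partial level-curve data of $f$ on $D_2$ to a full configuration. Your sketch of ``recording the combinatorics and matching the enclosed data'' omits exactly this content --- which level curves to extend, how many zeros and critical points must be invented outside $D$ and where, and why the completed configuration is realizable --- so the proposal does not yet prove the theorem.
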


First a topological lemma.

\begin{lemma}\label{lemma: Topological lemma.}
Let $K\subset\mathbb{C}$ be compact, such that $K$ and $K^c$ are both connected, and let $U\subset\mathbb{C}$ be an open set containing $K$.  Then there is a simply connected open set $\mathcal{O}$ containing $K$ such that $cl(\mathcal{O})\subset U$, and such that $\partial\mathcal{O}$ is smooth.
\end{lemma}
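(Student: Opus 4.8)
The plan is to build $\mathcal{O}$ by a standard compactness-plus-smoothing argument, using the fact that $K^c$ is connected to rule out the set's having holes. First I would cover $K$ by finitely many open balls $B_1,\dots,B_n$, each with $\overline{B_j}\subset U$; this is possible since $K$ is compact and $U$ is an open neighborhood of $K$. Set $V=\bigcup_{j=1}^n B_j$, so $K\subset V$, $cl(V)\subset U$, and $V$ has only finitely many connected components. Since $K$ is connected, $K$ lies in a single component $V_0$ of $V$, and replacing $V$ by $V_0$ we may assume $V$ is connected. Now $V$ is a bounded connected open set with $cl(V)\subset U$, but $V$ may fail to be simply connected: its complement $V^c$ may have bounded components (``holes''). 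For each bounded component $F$ of $V^c$, observe that $F$ is disjoint from $K$ (because $K\subset V$), and since $K^c$ is connected and $F\subset K^c$ is relatively clopen in... — more carefully, I would argue that filling in all bounded complementary components of $V$ keeps us inside $U$: let $\widehat{V}$ be the union of $V$ with all bounded components of $\mathbb{C}\setminus V$ (the ``polynomially convex hull'' / fill of $V$). Then $\widehat{V}$ is simply connected, bounded, open, and contains $K$.

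The key point, where connectedness of $K^c$ enters, is that $cl(\widehat{V})\subset U$. Suppose some bounded component $F$ of $V^c$ meets $U^c$. Since $F$ is connected and $F\cap K=\varnothing$, $F$ lies in $K^c$. But $K^c$ is connected and unbounded (as $K$ is compact), so $K^c$ has exactly one component, the unbounded one. A bounded component $F$ of $V^c$ contained in $K^c$ is then a bounded connected subset of the unbounded connected open set $K^c$; this alone is not a contradiction, so the argument must instead be run by choosing the original balls more cleverly. The clean way: since $K$ and $K^c$ are both connected, $K$ is what is sometimes called a \emph{cellular} or at least full continuum; I would invoke that $\mathbb{C}\setminus K$ is connected to conclude that we may choose the finite cover $V$ so that $\mathbb{C}\setminus V$ is also connected, hence $V$ itself is already simply connected — concretely, fix a point $\infty$-side and take $V$ to be a connected component of $\{z : d(z,K) < \varepsilon\}$ for small $\varepsilon$; for $\varepsilon$ small enough this $\varepsilon$-neighborhood has connected complement because $K^c$ is connected and open sets shrinking to $K^c$ from outside stay connected for small $\varepsilon$ (a routine compactness argument). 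This gives a bounded simply connected open $V$ with $K\subset V$ and $cl(V)\subset U$.

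Finally, to upgrade $V$ to have smooth boundary, I would take $\mathcal{O}$ to be a sublevel set of a smoothing of the distance function. Pick $\delta>0$ with $\{z : d(z,K)\le 2\delta\}\subset V$. Let $\rho$ be a smooth mollification of $z\mapsto d(z,K)$ (convolve with a bump supported in a ball of radius $\delta/2$), so $|\rho - d(\cdot,K)| < \delta/2$ everywhere. By Sard's theorem choose a regular value $c\in(\delta/2, \delta)$ of $\rho$, and set $\mathcal{O}=\{z : \rho(z) < c\}$. Then $K\subset\mathcal{O}$ since $\rho < \delta/2 + 0 < c$ on $K$; $cl(\mathcal{O})\subset\{d(\cdot,K)\le c+\delta/2\}\subset\{d(\cdot,K)\le 2\delta\}\subset V\subset U$; and $\partial\mathcal{O}\subset\rho^{-1}(c)$ is a smooth $1$-manifold since $c$ is a regular value. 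Simple connectivity of $\mathcal{O}$ follows because $\mathcal{O}\subset V$ and $\mathcal{O}$ contains the connected set $K$ together with a full neighborhood of it; more precisely one checks $\mathcal{O}$ is connected (it is a neighborhood of the connected set $K$ lying inside the connected $V$, and the component of $\mathcal{O}$ containing $K$ is the one we keep) and has connected complement (any bounded complementary component of $\mathcal{O}$ would be a bounded complementary component of the nested $V$, contradicting that $V$ has connected complement), so it is simply connected.

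The main obstacle I expect is the connectedness bookkeeping in the second paragraph: making precise why the $\varepsilon$-neighborhood of $K$ can be taken with connected complement, i.e.\ that holes cannot appear when thickening a continuum whose complement is already connected. This is intuitively clear but needs a careful compactness argument (if arbitrarily small neighborhoods all had holes, a limiting argument would produce a point of $K^c$ separated from $\infty$ by $K$, contradicting connectedness of $K^c$). Once that is pinned down, the smoothing via Sard is entirely routine.
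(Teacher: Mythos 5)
Your overall strategy (thicken $K$, control the topology using connectedness of $K^c$, then smooth via mollification and Sard) is more self-contained than the paper, which disposes of this lemma in one sentence by citing compactness together with Hilbert's lemniscate theorem. But the step you yourself flag as the main obstacle genuinely fails as stated: it is \emph{not} true that for all sufficiently small $\varepsilon$ the neighborhood $V_\varepsilon=\{z:d(z,K)<\varepsilon\}$ has connected complement. Take $K$ to be a ``comb with crossbars'': the base $[0,1]\times\{0\}$, vertical teeth $\{x_n\}\times[0,1]$ with $x_n\downarrow 0$, a short horizontal crossbar at the top of each tooth leaving a gap of width $\eta_n>0$ between consecutive crossbars, with $\eta_n\to0$ much faster than $x_n-x_{n+1}$, together with the limit tooth $\{0\}\times[0,1]$. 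This $K$ is compact and connected, and $K^c$ is connected (each inter-tooth chamber escapes through the gap of width $\eta_n$). Yet for every small $\varepsilon>0$ there is an $n$ with $\eta_n<2\varepsilon<x_n-x_{n+1}$, and for that $n$ the $\varepsilon$-neighborhood seals the gap at the top while leaving the chamber between teeth $n$ and $n+1$ uncovered: that chamber is a bounded component of $\mathbb{C}\setminus V_\varepsilon$. Your proposed limiting argument does not rescue this, because as $\varepsilon\to0$ the offending holes shrink toward the limit tooth; the limit point lands in $K$, not in $K^c$, so no point of $K^c$ separated from $\infty$ is produced and no contradiction arises.

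The fix is the route you started down and then abandoned: keep the component of $V_\varepsilon$ containing $K$ and \emph{fill in} the bounded components of its complement, proving instead the quantitative statement that for every $\delta>0$ there is $\varepsilon_0>0$ such that for $\varepsilon<\varepsilon_0$ every bounded component of $\mathbb{C}\setminus V_\varepsilon$ lies in $\{d(\cdot,K)<\delta\}$. This is where connectedness of $K^c$ genuinely enters: if $y_n$ lay in a bounded component of $\mathbb{C}\setminus V_{\varepsilon_n}$ with $d(y_n,K)\ge\delta$ and $\varepsilon_n\to0$, pass to a subsequential limit $y\in K^c$; since $K^c$ is open, connected and unbounded, there is a path from $y$ to $\infty$ at some positive distance $\rho$ from $K$, and for large $n$ this path, prepended with the segment $[y_n,y]$, avoids $V_{\varepsilon_n}$ entirely, putting $y_n$ in the unbounded component of the complement --- a contradiction. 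The filled-in set is then open, simply connected, contains $K$, and has closure in $\{d(\cdot,K)\le\delta\}\subset U$; your Sard/mollification smoothing (or, more simply, a Riemann map applied to a slightly shrunk disk, which yields an analytic boundary curve) finishes from there. One further small slip to repair at the end: a bounded component of $\mathbb{C}\setminus\mathcal{O}$ need not be a component of $\mathbb{C}\setminus V$ (it can sit inside $V$), so the final $\mathcal{O}$ may again need its holes filled; since those holes lie in $V\subset U$ and the boundary of the filled set is a subset of $\partial\mathcal{O}$, this costs nothing.
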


This follows from elementary properties of compactness, along with the theorem of Hilbert~\cite{Hi} that the boundary of a simply connected domain may be approximated arbitrarily well by a polynomial lemniscate.

\begin{proof}[Proof of Theorem~\ref{thm: Analytic implies equiv to poly.}.]
To say that $h$ is analytic on the compact set $cl(D)$ is to say that $h$ is analytic on some bounded, open set $\mathcal{O}$ containing $cl(D)$.  We wish to replace $D$ with some larger open set $D_2$ containing $D$ such that, while maintaining the properties of $D$ mentioned in the statement of the theorem, in addition the boundary of $D_2$ has certain nice properties.  The properties which we wish to preserve in $D_2$ are:

\begin{enumerate}
\item[1.]
$cl(D_2)\subset\mathcal{O}$.

\item[2.]
$cl(D_2)$ is simply connected.
\end{enumerate}

Since the zeros of $h$ and of $h'$ are isolated in $\mathcal{O}$, we may expand $D$ slightly to $D_2$ so that $h$ and $h'$ are non-zero on $\partial D_2$.

\begin{enumerate}
\item[3.]
$h$ and $h'$ are non-zero on $\partial D_2$.
\end{enumerate}

By Lemma~\ref{lemma: Topological lemma.}, we may expand $D_2$ slightly so that Properties~1-3 still hold, and now $\partial D_2$ is smooth.  Since $h'$ is non-zero on $\partial D_2$, at each point $w$ on or near $\partial D_2$ the sets $\{z:|h(z)|=|h(w)|\}$ and $\{z:\arg(h(z))=\arg(h(w))\}$ are locally perpendicular to each other at $w$.  Therefore we may again expand $D_2$ in $\mathcal{O}$ slightly so that Properties~1-3 hold, and additionally:

\begin{enumerate}
\item[4.]
$\partial D_2$ is piecewise smooth, and on each smooth segment of $\partial D_2$, either $|h|$ is constant or $\arg(h)$ is constant. 
\end{enumerate}

We will call these segments ``level curve segments'' and ``gradient line segments'' of $\partial D_2$ respectively.  Moreover, since $\partial D$ is compact, we may assume that $\partial D_2$ consists of only finitely many of these segments, and since $h'$ is non-zero on $\partial D_2$, no two level curves or gradient lines of $h$ intersect on $\partial D_2$, so the level curve and gradient line segments of $\partial D_2$ alternate around $\partial D_2$.

If $\partial D_2$ consists of a single level curve of $h$, then the pair $(h,D_2)$ is a generalized finite Blaschke product, and the desired result follows directly from Theorem~\ref{thm: Finite Blaschke product-polynomial conformal equivalence.}.  Otherwise the critical level curves of $h$ in $D_2$ will likely not form a member of $PC_a$.  Our strategy of proof for Theorem~\ref{thm: Analytic implies equiv to poly.} will be to extend the level curves of $h$ in $D_2$ outside of $D_2$ to form members of $P_a$, and further so that the configuration of these extended level curves form a member $\langle\lambda\rangle_{PC_a}$ of $PC_a$.  Theorem~\ref{thm: Polynomial critical level curve config existence.} will then supply us with a polynomial $p$ whose configuration of critical level curves is exactly $\langle\lambda\rangle_{PC_a}$.  If we then restrict the domain of $p$ appropriately, we will see that $h$ is conformally equivalent to the restricted polynomial $p$.  Before making this construction rigorous, we will illustrate this process by the following example.

\begin{figure}[H]
\begin{minipage}[b]{0.45\linewidth}
\centering
\includegraphics[width=\textwidth]{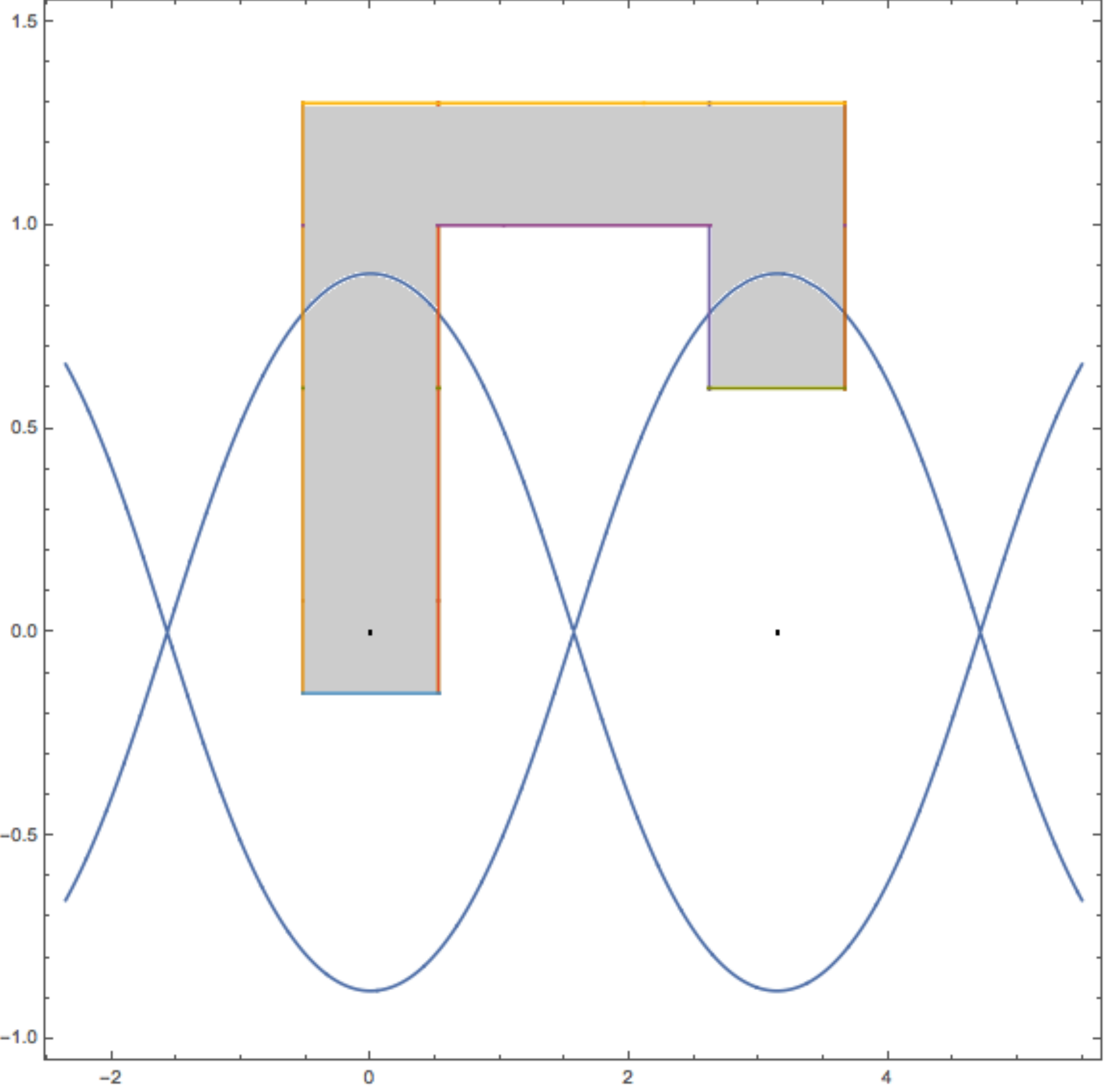}
\caption{The domain $D$}
\label{fig: Domain D.}
\end{minipage}
\hspace{0.5cm}
\begin{minipage}[b]{0.45\linewidth}
\centering
\includegraphics[width=\textwidth]{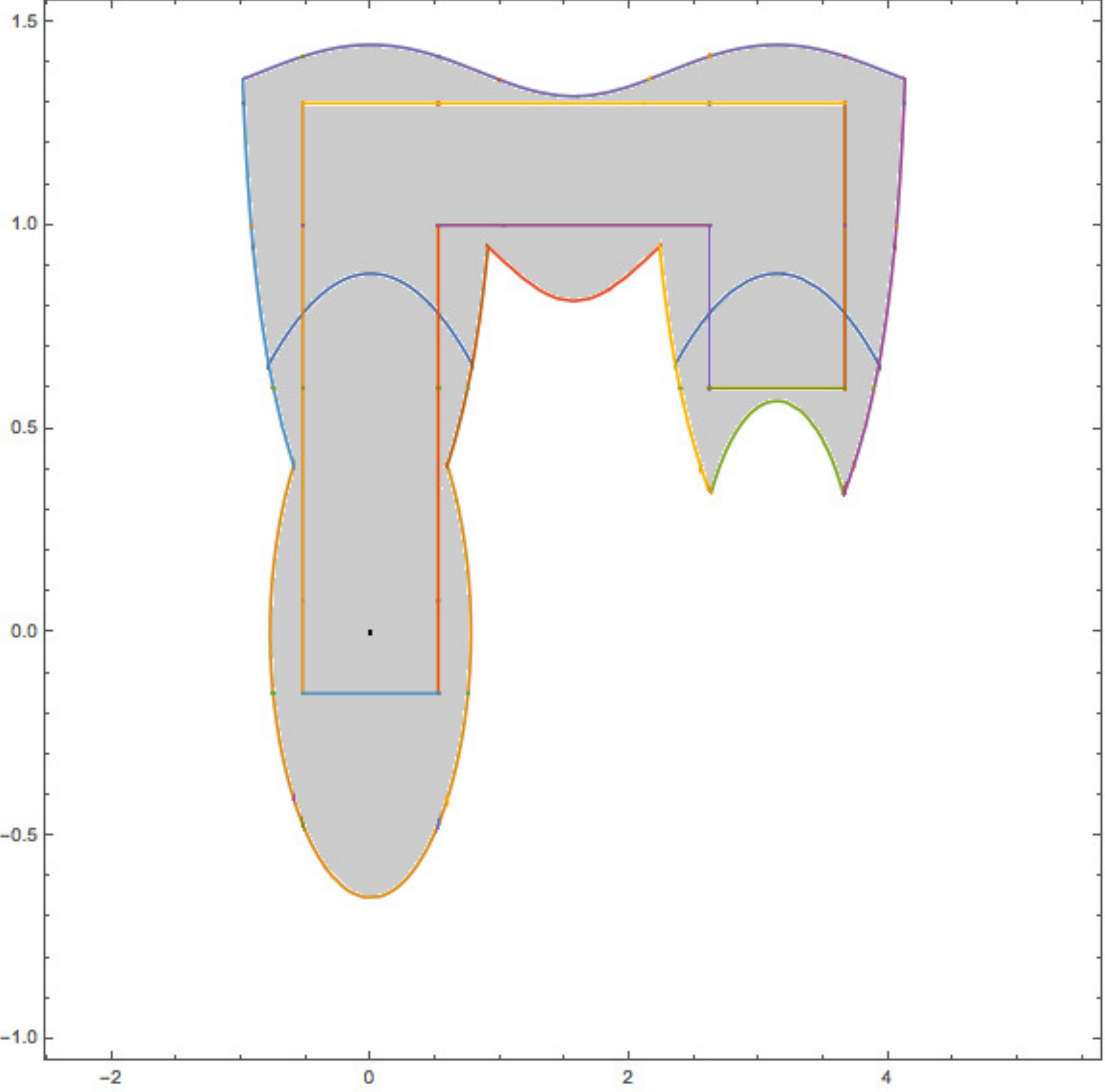}
\caption{The domain $D_2$}
\label{fig: Domain D_2}
\end{minipage}
\end{figure}

\begin{example}
Define $h(z)=\sin(z)$.  All of the critical points of $h$ are contained in a single level curve.  Figure~\ref{fig: Domain D.} depicts the zeros of $h$ along with the critical level curve of $h$, and we let $D$ denote the shaded region in that figure.  Our first step is to expand $D$ slightly to the set $D_2$ depicted in Figure~\ref{fig: Domain D_2}, so that the portions of $\partial D_2$ alternate between stretches of level curve and gradient line of $h$, and disregarding all data about $h$ outside of $D_2$.

We then extend the critical level curves of $h$ out of $D_2$ to form analytic level curve type sets, and extend the level curve segments of $\partial D_2$ similarly, as in Figure~\ref{fig: Level curve segs extended}.  In each bounded face $F$ of one of these extended graphs which does not contain a zero of $h$ already, we identify some point to be an implied zero of $h$ in $F$.  Since two of the level curves are external to each other without any critical level curve separating them, we introduce an implied critical point outside of $D_2$ to account for this.  The implied zero and critical point are depicted in Figure~\ref{fig: Implied points.}.  Finally we find the polynomial whose critical level curves are identical to the configuration we have formed in Figure~\ref{fig: Implied points.}, in this case $p(z)=z^2-\sqrt{3.4}z$.  Figure~\ref{fig: Polynomial region.} depicts the critical level curves of $p$, and if we restrict the domain of $p$ to the shaded region $E$ in that figure, which is the region where $p$ takes the same values that $h$ takes on $D_2$, then $h$ on $D_2$ is conformally equivalent to $p$ on $H$ (ie there is a conformal map $\phi:D_2\to E$ such that $h=p\circ\phi$ on $D_2$).

\begin{figure}[H]
\begin{minipage}[b]{0.45\linewidth}
\centering
\includegraphics[width=\textwidth]{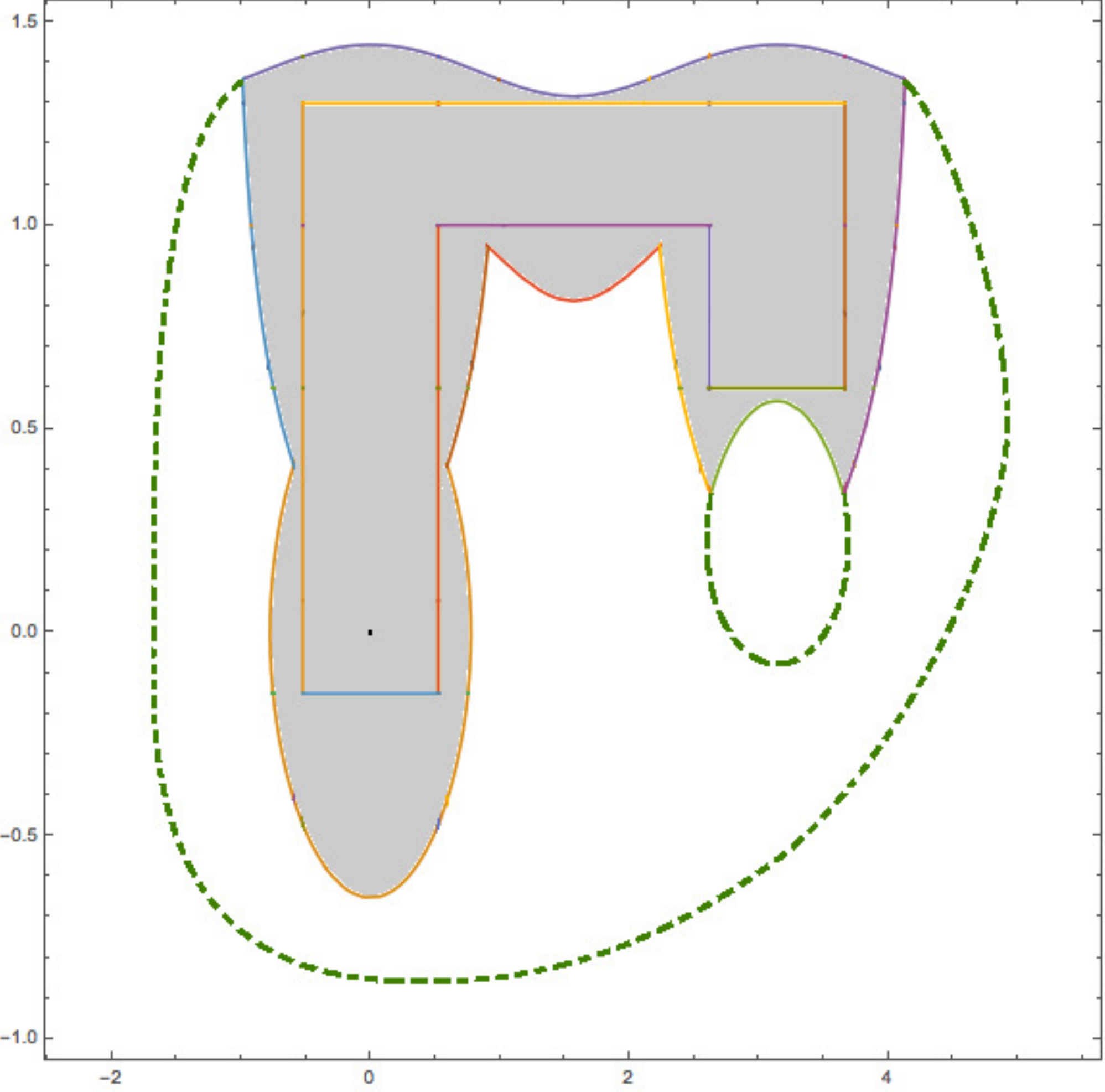}
\caption{Extending the level curve segments of $\partial D_2$}
\label{fig: Level curve segs extended}
\end{minipage}
\hspace{0.5cm}
\begin{minipage}[b]{0.45\linewidth}
\centering
\includegraphics[width=\textwidth]{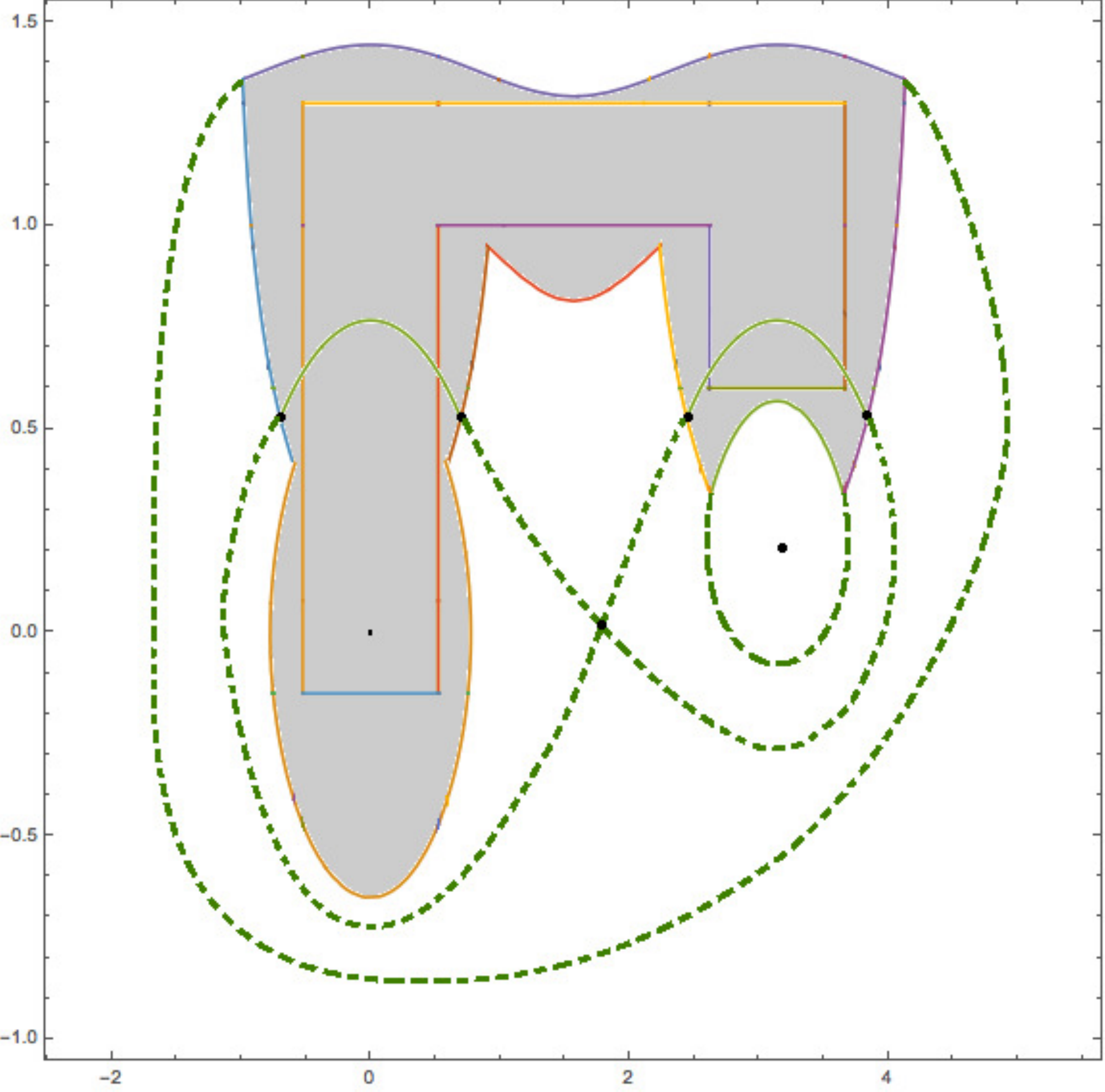}
\caption{The implied zero and critical point of $h$}
\label{fig: Implied points.}
\end{minipage}
\end{figure}

\begin{figure}[H]
\centering
\includegraphics[width=0.45\textwidth]{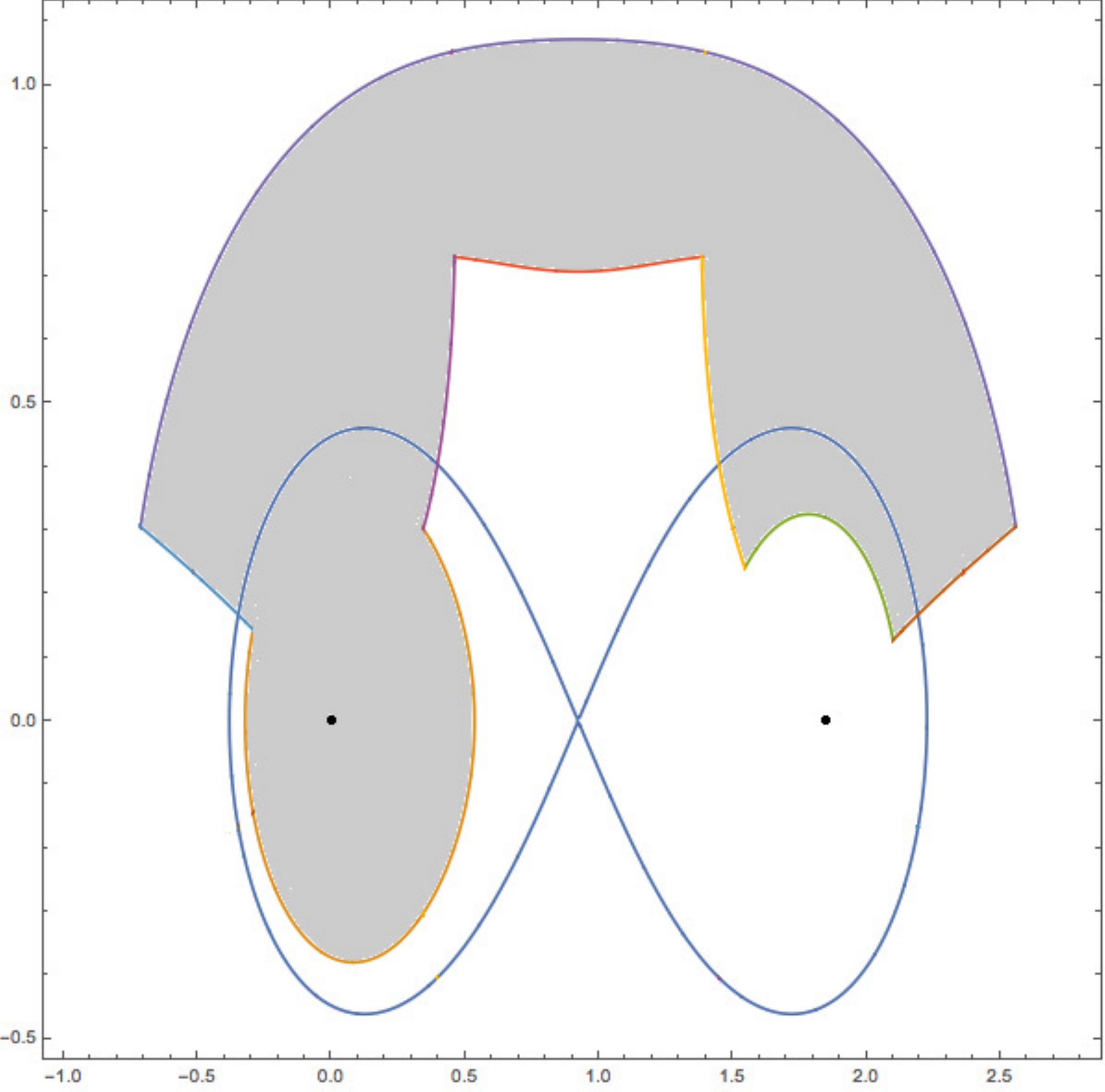}
\caption{Region for $p(z)=z^2-\sqrt{3.4}z$}
\label{fig: Polynomial region.}
\end{figure}

\end{example}

Returning to the proof of Theorem~\ref{thm: Conformal equivalence, most general.}, our extension of the level curves of $h$ will occur in three phases.  First we will extend (if necessary) the level curves of $h$ which contain the critical points of $h$ in $D_2$ (we will call these level curves the \textit{explicit critical level curves} of $h$ in $cl(D_2)$).  Second we will extend the level curves of $h$ in $cl(D_2)$ which contain level curve segments of $\partial D_2$.  Finally, by inspecting the already extended (and unextended) level curves of $h$ in $cl(D_2)$, we will extend some level curves of $h$ in $cl(D_2)$ to account for any ``implied critical points'' of $h$.  In order to explain this notion of an implied critical point of $h$, consider the following.  Suppose that $h$ has several distinct zeros in $D_2$, but no critical points in $D_2$.  Suppose further that we extend the level curves of $h$ in $cl(D_2)$ to form some member of $PC_a$, $\langle\lambda\rangle_{PC_a}$.  Theorem~\ref{thm: Polynomial critical level curve config existence.} implies that $\langle\lambda\rangle_{PC_a}=\Pi(p,G_p)$ for some polynomial $p$ (where $G_p$ is the set $\{z:|p(z)|<\epsilon\}$ for some $\epsilon>0$), and this polynomial $p$ must in turn have at least as many distinct zeros in $G_p$ as $h$ has in $D_2$.  However, it is not hard to show that any polynomial $p$ with several distinct zeros must have critical points which are not also zeros of $p$.  Therefore the configuration $\langle\lambda\rangle_{PC_a}$ must contain critical points (ie. vertices of graph members of $P_a$) which are not zeros (ie. single point members of $P_a$), and thus in our extension of the level curves of $h$ in $cl(D_2)$, we must, in some sense, produce these critical points.  These critical points which we produce with our level curve extensions outside of $D_2$ will be called \textit{implied critical points} of $h$.

In order to make rigorous the notion of the extension of a level curve of $h$ outside of $D_2$, we will need several definitions.  We begin by categorizing the gradient line and level curve segments of $\partial D_2$ as follows.

\begin{definition}
Let $E$ be one of the gradient line segments of $\partial D_2$.

\begin{itemize}
\item
If $\arg(h(z))$ is increasing as $z$ crosses $E$ from inside $D_2$ moving outwards, then we call $E$ a \textit{left edge} of $\partial D_2$.
\item
If $\arg(h(z))$ is decreasing as $z$ crosses $E$ from inside $D_2$ moving outwards, then we call $E$ a \textit{right edge} of $\partial D_2$.
\end{itemize}

Now let $E$ be one of the level curve segments of $\partial D_2$.

\begin{itemize}
\item
If $|h(z)|$ is increasing as $z$ crosses $E$ from inside $D_2$ moving outwards, then we call $E$ a \textit{top edge} of $\partial D_2$.
\item
If $|h(z)|$ is decreasing as $z$ crosses $E$ from inside $D_2$ moving outwards, then we call $E$ a \textit{bottom edge} of $\partial D_2$.
\end{itemize}
\end{definition}

We also categorize the corners of $\partial D_2$ as follows.

\begin{definition}
For $u\in\partial D_2$, we say that $u$ is a corner of $\partial D_2$ if $u$ is contained in both a gradient line segment of $\partial D_2$ and a level curve segment of $\partial D_2$.  In this case, we make the following definition.

\begin{itemize}
\item
If $cl(D_2)$ is locally convex at $u$, then we call $u$ an \textit{outside corner} of $\partial D_2$.
\item
If $cl(D_2)$ is not locally convex at $u$, then we call $u$ an \textit{inside corner} of $\partial D_2$.
\end{itemize}
\end{definition}

For a point $u$ in one of the gradient line segments of $\partial D_2$, which is not an inside corner of $\partial D_2$, we wish to have a well-defined way of selecting the other end point of a prospective extension out of $cl(D_2)$ of the level curve of $h$ which contains $u$, so we make the following definition.

\begin{definition}
Let $u\in\partial D_2$ be a point in one of the gradient line segments of $\partial D_2$ which is not an inside corner of $\partial D_2$.  Let $E$ denote this segment of $\partial D_2$ and let $\epsilon>0$ denote the value $|h(u)|$.  If $E$ is a right edge of $\partial D_2$ (respectively left edge of $\partial D_2$), let $v$ denote the first point in $\partial D_2$ after $u$ in the negative (respectively positive) direction such that $|h(v)|=\epsilon$.  We call $v$ the \textit{neighbor of $u$}, and write $v=\mathcal{N}(u)$.
\end{definition}

In the definition of left and right edges of $\partial D_2$ above, one might equivalently have said that a gradient line segment $E$ of $\partial D_2$ is a left edge of $\partial D_2$ if $|h|$ is decreasing as $E$ is traversed with positive orientation around $\partial D_2$, and $E$ is a right edge of $\partial D_2$ if $|h|$ is increasing as $E$ is traversed with positive orientation around $\partial D_2$.  With this equivalent definition in mind it is not hard to see that, if $u$ is in a gradient line segment $E$ of $\partial D_2$, and $u$ is not an inside corner of $\partial D_2$, then the following holds.

\begin{itemize}
\item
If $E$ is a left edge (respectively right edge) of $\partial D_2$, then $\mathcal{N}(u)$ is in a right edge (respectively left edge) of $\partial D_2$.

\item
$\mathcal{N}(u)$ is not an inside corner of $\partial D_2$.

\item
$\mathcal{N}(\mathcal{N}(u))=u$.
\end{itemize}

\begin{definition}
Let $u\in\partial D_2$ be any point in a gradient line segment of $\partial D_2$ which is not an inside corner of $\partial D_2$.  If $u$ is in a left edge of $\partial D_2$, let $L_u$ denote the segment of $\partial D_2$ with initial point $u$ and final point $\mathcal{N}(u)$ (with respect to positive orientation around $\partial D_2$).  If $u$ is in a right edge of $\partial D_2$, let $L_u$ denote the segment of $\partial D_2$ with initial point $\mathcal{N}(u)$ and final point $u$ (with respect to positive orientation around $\partial D_2$).
\end{definition}

Note that by the symmetry of the definition of a neighbor point, $L_u=L_{\mathcal{N}(u)}$.  Also, by the maximum modulus theorem and the open mapping theorem, if $\epsilon=|f(u)|$, then for all $z\in L_u$, $|f(z)|<\epsilon$.  We will now define the way that, for a level curve $\Lambda$ of $h$ in $D_2$ which intersects $\partial D_2$ in a gradient line segment, but not at an inside corner, we will extend $\Lambda$ out of $D_2$ to form an analytic level curve type set.

\begin{definition}
Let $u\in\partial D_2$ be any point in a gradient line segment of $\partial D_2$ which is not an inside corner of $\partial D_2$.  We define $\gamma_u$ (up to homotopy) to be the simple path with initial point $u$ and final point $\mathcal{N}(u)$ which is contained in ${cl(D_2)}^c$ (except of course at its end points) such that $L_u$ is not adjacent to the unbounded face of $cl(D_2)\cup\gamma_u$.  We call $\gamma_u$ an \textit{extension path} and, in particular, we say that $\gamma_u$ extends the level curve of $h$ in $cl(D_2)$ which contains $u$.
\end{definition}

Again note that, by symmetry, $\gamma_u=\gamma_{\mathcal{N}(u)}$.  We will now construct the full extension of a given level curve of $h$ in $cl(D_2)$.  Let $\Lambda$ be any level curve of $h$ in $cl(D_2)$, and let $\epsilon\geq0$ denote the value $|h|$ takes on $\Lambda$.  If $\Lambda$ does not contain any point in a gradient line segment of $\partial D_2$ which is not an inside corner of $\partial D_2$, then by the full extension of $\Lambda$ we just mean $\Lambda$ itself.  To make it easier to refer to different level curves of $h$ in $cl(D_2)$, we make the following definition.

\begin{definition}
For any point $u$ in $cl(D_2)$, define $\Lambda_u$ to be the level curve of $h$ in $cl(D_2)$ which contains $u$.
\end{definition}

Suppose that $\Lambda$ contains some point $u$ in $\partial D_2$ which is in a gradient line segment of $\partial D_2$, which is not an inside corner of $\partial D_2$, and to which we have not already attached an extending path.  We now replace $\Lambda$ with $\Lambda\cup\gamma_u\cup\Lambda_{\mathcal{N}(u)}$.  That is, we extend $\Lambda$ with $\gamma_u$ as well as with the level curve of $h$ in $cl(D_2)$ which contains the other end point of $\gamma_u$.  Since there are only finitely many points in gradient line segments of $\partial D_2$ at which $|h|$ takes the value $\epsilon$, if we iterate this process it will terminate in finitely many steps.  That is, after finitely many steps $\Lambda$ will not contain any point in a gradient line segment of $\partial D_2$ which is not an inside corner, and to which we have not already attached an extending path.  The graph obtained at the final step in this process we call the \textit{full extension} of $\Lambda$.

For any points $u$ and $u'$ contained in a gradient line segment of $\partial D_2$, by the definition of a neighbor point, $u'$ is contained in $L_u$ if and only if $\mathcal{N}(u')$ is contained in $L_u$, and if $u'$ is contained in $L_u$, then $L_{u'}\subset L_u$.  Therefore we may always select $\gamma_u$ and $\gamma_{u'}$ so that they do not intersect, and therefore while extending $\Lambda$ as described above, we may choose our extension paths so that none of the extension paths intersect each other and, moreover, if $\Lambda'$ is any other level curve of $h$ in $cl(D_2)$, the full extensions of $\Lambda$ and $\Lambda'$ are either equal or non-intersecting.

We will now show that the full extension $\Lambda$ of any level curve of $h$ in $cl(D_2)$ has the following properties.

\begin{enumerate}
\item\label{item: Connected finite graph.}
$\Lambda$ is a connected finite graph.

\item\label{item: Evenly many edges.}
There are evenly many, and more than two, edges of $\Lambda$ incident to each vertex of $\Lambda$ (where we count an edge twice if both of its end points are at a given vertex).

\item\label{item: Adjacent to unbounded.}
Each edge of $\Lambda$ is incident to the unbounded face of $\Lambda$.
\end{enumerate}

That is, we wish to show that $\Lambda$ is an analytic level curve type set.  Item~\ref{item: Connected finite graph.} should be immediately obvious from the construction of $\Lambda$.  In order to show Item~\ref{item: Evenly many edges.}, let $v$ be a vertex of $\Lambda$ (if $\Lambda$ has any vertices).  By the construction of a full extension, there are no vertices of $\Lambda$ outside of $cl(D_2)$, so $v\in cl(D_2)$.  Moreover, if $v$ were in $\partial D_2$, by the construction of a full extension above, all but one of the edges of $\Lambda$ which are incident to $v$ must extend into $D_2$.  This would imply that $v$ is a critical point of $h$, since level curves of $h$ are smooth away from critical points of $h$.  This cannot occur however, as $h'\neq0$ on $\partial D_2$.  We conclude therefore that $v\in D_2$, and that $v$ is a critical point of $h$.  Let $k>0$ denote the multiplicity of $v$ as a zero of $h'$.  Since $h$ is $(k+1)$-to-$1$ in a neighborhood of $v$, there are $2(k+1)$ edges of $\Lambda$ meeting at $v$ (counting an edge twice if both its end points are at $v$).  This establishes Item~\ref{item: Evenly many edges.}.

Let $E$ denote one of the edges of $\Lambda$.  As discussed above, the vertices of $\Lambda$ (and thus the end points of $E$) are critical points of $h$ in $D_2$, so there are points in $E$ which are contained in $D_2$.  Let $w$ denote one of these points.  The open mapping theorem implies that there are points arbitrarily close to $w$ at which $|h|$ takes values greater than $\epsilon$.  Therefore if we show that $|h(z)|<\epsilon$ for each point $z$ in $cl(D_2)$ which is also contained in a bounded face of $\Lambda$, that implies that $E$ is adjacent to the unbounded face of $\Lambda$.  

Let $\Lambda_0$ denote the original level curve which was extended to form the full extension $\Lambda$.  Let $N\geq0$ denote the number of steps required to form $\Lambda$ from $\Lambda_0$, and for each $i\in\{1,\ldots,N\}$, let $\Lambda_i$ denote the graph obtained after the $i^{\text{th}}$ step (thus the fully extended graph $\Lambda$ equals $\Lambda_N$).  We will prove the desired result by induction on $i$.  That is, by applying induction to the statement $\mathcal{J}(i)$ below.
\[
\mathcal{J}(i):\text{For each }z\in cl(D_2)\text{ which is contained in a bounded face of }\Lambda_i,|h(z)|<\epsilon.
\]

We first show $\mathcal{J}(0)$.  Let $F$ denote one of the bounded faces of $\Lambda_0$.  Since $\Lambda_0$ consists only of a level curve of $h$ in $cl(D_2)$, $F$ is contained entirely in $cl(D_2)$, so the maximum modulus theorem implies that $|h(z)|<\epsilon$ for each $z\in F$.  This establishes $\mathcal{J}(0)$.

Now select $i\in\{1,\ldots,N\}$, and suppose that $\mathcal{J}(i-1)$ holds.  Let $u_i$ be the point (or one of the points) in $\Lambda_{i-1}$ such that $\gamma_{u_i}$ is the extension path added to $\Lambda_{i-1}$ in the formation of $\Lambda_i$.  Let $F_1$ denote some bounded face of $\Lambda_i$, and let $F_2$ denote one of the components of $F_1\cap cl(D_2)$.    Observe first that if $F_1$ is a bounded face of $\Lambda_{i-1}$ alone, then the desired result follows from the induction assumption, and if $F_1$ is a bounded face of $\Lambda_{\mathcal{N}(u_i)}$ alone, then the desired result follows from the same reasoning found in our proof of the base case (ie $\mathcal{J}(0)$).  Therefore let us suppose that $\partial F_1$ is not contained entirely in either $\Lambda_{i-1}$ or $\Lambda_{\mathcal{N}(u_i)}$.

By definition of a level curve of an analytic function, the maximum modulus theorem, and the induction assumption, either $\Lambda_{\mathcal{N}(u_i)}\subset\Lambda_{i-1}$ or $\Lambda_{\mathcal{N}(u_i)}$ and $\Lambda_{i-1}$ are mutually exterior (that is, each is contained in the unbounded face of the complement of the other).  However, since $\gamma_{u_i}$ is a simple path with end points in $\Lambda_{i-1}$ and $\Lambda_{\mathcal{N}(u_i)}$, if $\Lambda_{i-1}$ and $\Lambda_{\mathcal{N}(u_i)}$ are mutually exterior then $\gamma_{u_i}$ is not incident to any bounded face of $\Lambda_i$.  But $F_1$ is not a bounded face of $\Lambda_{i-1}$ or of $\Lambda_{\mathcal{N}(u_i)}$ alone, so $F_1$ must be incident to $\gamma_{u_i}$.  We conclude that $\Lambda_{i-1}$ and $\Lambda_{\mathcal{N}(u_i)}$ cannot be mutually exterior, and thus $\Lambda_{\mathcal{N}(u_i)}\subset\Lambda_{i-1}$, and thus $\Lambda_i=\Lambda_{i-1}\cup\gamma_{u_i}$.

If $F_2$ (the component of $F_1\cap cl(D_2)$) were contained in a bounded face of $\Lambda_{i-1}$, the desired result would hold by the induction assumption.  Thus $F_2$ is contained in the unbounded face of $\Lambda_{i-1}$.  The only segment of $\partial D_2$ contained in a bounded face of $\Lambda_i$ but in the unbounded face of $\Lambda_{i-1}$, is $L_{u_i}$.  Since $\partial F_2\subset\Lambda_i\cup\partial D_2$, it follows that $\partial F_2\subset\Lambda_i\cup L_{u_i}$.  Since $|h|<\epsilon$ on $L_{u_i}$, and $|h|=\epsilon$ at any point in $\Lambda_i\cap cl(D_2)$, $|h|\leq\epsilon$ on $\partial F_2$.  The desired result now follows from the maximum modulus theorem.  This establishes Item~\ref{item: Adjacent to unbounded.}, and we thus conclude that the full extension $\Lambda$ is an analytic level curve type set.

We will now select certain of the level curves of $h$ in $cl(D_2)$ to fully extend. Let $\mathbb{A}$ denote the collection of the full extensions of the following level curves $h$ in $cl(D_2)$.

\begin{itemize}
\item
Full extensions of all explicit critical level curves of $h$ in $cl(D_2)$.
\item
Full extensions of level curves of $h$ in $cl(D_2)$ which intersect a level curve segment of $\partial D_2$.
\item
Zeros of $h$ in $cl(D_2)$.
\end{itemize}

Since there are only finitely many of each of these classes of level curves of $h$ in $cl(D_2)$, $\mathbb{A}$ is a collection of finitely many analytic level curve type sets.

If $\Lambda$ is one of the full extensions in $\mathbb{A}$, and $\gamma$ is one of the extension paths used to construct $\Lambda$, we will now define a notion of the change in $\arg(h)$ along $\gamma$, which we will call $\Delta_{\arg}(\gamma)$.  Of course since $\gamma$ extends outside of $cl(D_2)$, $h$ need not be defined on $\gamma$.  The idea is to ask, if $h$ were defined on $\gamma$, what would the change in $\arg(h)$ along $\gamma$ be assuming that $h$ is as simple as possible (ie. $h$ does not have any zeros outside of $D_2$ whose presence are not already implied in some way by facts about $h$ in $D_2$).  We will then define $\Delta_{\arg}(\gamma)$ to be this quantity.

To begin with, for any segment $L\subset\partial D_2$, let $\Delta_{\arg}(L)$ denote the change in $\arg(h(z))$ as $z$ traverses $L$ with positive orientation with respect to $\partial D_2$.  We will define the quantities $\Delta_{\arg}(\gamma)$ for the different extension paths $\gamma$ used to construct the members of $\mathbb{A}$ recursively, working ``inside out'' as follows.

Let $\Lambda$ be a member of $\mathbb{A}$ which contains a point $u$ which is in a gradient line segment of $\partial D_2$, and which is not an inside corner of $\partial D_2$.  Since $\gamma_u=\gamma_{\mathcal{N}(u)}$, we may assume without loss of generality that $u$ is in a left edge of $\partial D_2$.  Let $F_u$ denote the bounded component of $(cl(D_2)\cup\gamma_u)^c$.  We begin by assuming that $F_u$ does not contain any extension path used to construct any other member of $\mathbb{A}$ (the ``base case'' of our recursive definition of $\Delta_{\arg}(\gamma)$).  The boundary of $F_u$ consists of $\gamma_u$ and $L_u$, so we define $\Delta_{\arg}(\gamma_u)\in(0,\infty)$ to be the least number such that $\Delta_{\arg}(\gamma_u)-\Delta_{\arg}(L_u)\geq0$ and $\Delta_{\arg}(\gamma_u)-\Delta_{\arg}(L_u)$ is an integer multiple of $2\pi$.  (Note that since $F_u\subset cl(D_2)^c$, the change in $\arg(h)$ along $L_u$ as $L_u$ is traversed with positive orientation with respect to $F_u$ is $-\Delta_{\arg}(L_u)$.)  $\Delta_{\arg}(\gamma_u)$ has been chosen to be the smallest number so that, if $h$ could be extended to an analytic function on $F_u$, with $\arg(h)$ increasing along $\gamma_u$, then the net change in $\arg(h)$ along $\gamma_u$ might be $\Delta_{\arg}(\gamma_u)$.

Now let us suppose that $F_u$ contains some extension paths which were used in the construction of members of $\mathbb{A}$.  Let $\gamma_1,\ldots,\gamma_M$ denote these extension paths.  Suppose recursively that $\Delta_{\arg}(\gamma_i)$ has been defined for each $1\leq i\leq M-1$.  Let ${F_u}'\subset F_u$ denote the bounded face of $\left(cl(D_2)\cup\gamma_u\cup\ds\bigcup_{i=1}^M\gamma_i\right)^c$ to which $\gamma_u$ is adjacent.  Let $L$ denote the path $\partial {F_u}'\setminus\gamma_u$, and let $\Delta_{\arg}(L)$ denote the total change in $\arg(h(z))$ as $z$ traverses $L$ with positive orientation (with respect to ${F_u}'$), using $\Delta_{\arg}(\gamma_i)$ when calculating $\Delta_{\arg}(L)$ if $\gamma_i\subset L$.  We now define $\Delta_{\arg}(\gamma_u)\in(0,\infty)$ to be the least number such that $\Delta_{\arg}(\gamma_u)-\Delta_{\arg}(L)\geq0$ and $\Delta_{\arg}(\gamma_u)-\Delta_{\arg}(L)$ is an integer multiple of $2\pi$.

With the quantity $\Delta_{\arg}(\gamma)$ defined for all extension paths $\gamma$ used in the construction of the members of $\mathbb{A}$, we can now determine the locations of any implied zeros and critical points of $h$.  Let $F$ denote one of the bounded components of the set $\left(cl(D_2)\cup\ds\bigcup_{\Lambda\in\mathbb{A}}\Lambda\right)^c$.  $F$ is of course contained in ${cl(D_2)}^c$, and thus does not contain any zero of $h$ in $cl(D_2)$.  However the members of $\mathbb{A}$ are meant to represent the possible level curve structure of an analytic function which has the same level curve structure as that of $h$ in $cl(D_2)$.  Let $(f,G)$ denote a generalized finite Blaschke product whose level curve structure extends the level curve structure of $h$ in $cl(D_2)$, and such that the extension paths which we have used to make the full extensions found in $\mathbb{A}$ are also level curves of $f$ in $G$ (if such a generalized finite Blaschke product $(f,G)$ exists).  Then $\Delta_{\arg}(\partial F)$, the change in $\arg(f(z))$ as $z$ traverses $\partial F$ with positive orientation, should be equal to $2\pi$ times the number of zeros of $f$ in $F$.  Therefore we say that there are $\Delta_{\arg}(\partial F)/2\pi$ implied zeros of $h$ in $F$.  Define $n\colonequals\Delta_{\arg}(\partial F)/2\pi$.

If $n=0$, then we leave $F$ alone.  Suppose now that $n>0$.  Since the members of $\mathbb{A}$ have extended all level curve segments of $\partial D_2$, either $\partial F$ consists of a single level curve portion of $\partial D_2$ along with a single extension path $\gamma$, or $\partial F$ consists alternatingly of exactly two level curve segments of $\partial D_2$ and two gradient line segments of $\partial D_2$.

Suppose that $\partial F$ consists of a single portion of $\partial D_2$ and a single extension path.  Then we choose some point fixed point in $F$ which we call an implied zero of $f$ with multiplicity $n$.

Suppose now that the second possibility obtains, that $\partial F$ alternates between exactly two level curve segments of $\partial D_2$ and two gradient line segments of $\partial D_2$.  (If $\partial F$ contains an extension path $\gamma$, then we view $\gamma$ as a level curve segment, as it is meant to represent a possible segment of level curve of $h$ extending out of $D_2$.)  It follows that of the two gradient line segments of $\partial D_2$ which form $\partial F$, one is part of a left edge of $\partial D_2$ and one is part of a right edge of $\partial D_2$.

Select some point $u$ in the left edge gradient line segment of $\partial D_2$ which forms part of $\partial F$ such that $u$ is not a corner of $\partial F$.  Since the extension path $\gamma_u$ may be selected so as not to intersect any other extension path already used to form the members of $\mathbb{A}$, it follows that $\mathcal{N}(u)$ is contained in the other gradient line segment of $\partial D_2$ which forms $\partial F$.  Define $\epsilon\colonequals|h(u)|$.  $\gamma_u$ dissects $F$ into two pieces.  Let $F_1$ denote the piece of $F\setminus \gamma_u$ such that $|h(z)|\leq\epsilon$ on $\partial F_1$, which is the left piece as $\gamma_u$ is traversed from $u$ to $\mathcal{N}(u)$ since $h$ is analytic.  Let $L$ denote the segment of $\partial F$ with beginning point $\mathcal{N}(u)$ and end point $u$ (with respect to positive orientation around $F$) which is contained in $\partial F_1$.  Let $\Delta_{\arg}(L)$ denote the change in $\arg(h(z))$ as $z$ traverses $L$ from $u$ to $\mathcal{N}(u)$.  Then we define $\Delta_{\arg}(\gamma_u)\colonequals\Delta_{\arg}(L)$.  Thus the net change in $\arg(h(z))$ as $z$ traverses $\partial F_1$ is $\Delta_{\arg}(\gamma_u)-\Delta_{\arg}(L)=0$.  Let $F_2$ denote the other component of $F\setminus\gamma_u$.  We still have that the net change in $\arg(h(z))$ as $z$ traverses $\partial F_2$ equals $2\pi n$, which we will now account for with an implied zero of $h$ in $F_2$.

Select some point $u'$ in $\gamma_u$ which is not an end point of $\gamma_u$.  Choose some fixed value $\alpha\in(\arg(h(u)),\arg(h(u))+\Delta_{\arg}(\gamma_u))$, and define $\arg(h(u'))$ to be the number $\alpha$.  Join a circle $C$ contained in $F_2$ to $\gamma_u$ at $u'$.  In the bounded face of this circle select some fixed point which we will now call an implied zero of $h$ of multiplicity $n$.  Include this implied zero of $h$ in $\mathbb{A}$.  Define $\Delta_{\arg}(C)$ to be $2\pi n$, and replace $\gamma_u$ with $\gamma_u\cup C$.  Let $F_3$ denote the bounded face of $C$, and let $F_4$ denote the segment of $F_2$ outside of $C$.  We now have that $F$ is decomposed into $F_1$, $F_3$, and $F_4$.  The net change in $\arg(h(z))$ as $z$ traverses $\partial F_1$ or $\partial F_4$ is $0$, and these regions contain no zeros of $h$.  The net change in $\arg(h(z))$ as $z$ traverses $\partial F_3$ (which equals $C$) is $2\pi n$, and this region now contains an implied zero of $h$ of multiplicity $n$.  Form the rest of the full extension of the level curve of $h$ which contains the point $u$ in the usual way and include it (along with the joined arc $C$) in $\mathbb{A}$.

It is worth noting here that since the circle $C$ was chosen to reside in the region $F_2$ such that $|h|\geq\epsilon$ on $\partial F_2$, the maximum modulus theorem implies that $C$ resides in the unbounded face of the the full extension of $\Lambda_u$.  Therefore since this full extension is an analytic level curve type set, the graph obtained by joining $C$ to this full extension is an analytic critical level curve type set as well.

By performing this selection of implied zeros for each face $F$ of $cl(D_2)\cup\mathbb{A}$ for which $\Delta_{\arg}(\partial F)\neq0$, we account for all the implied zeros of $h$.  That is, having done this process, if $F$ is a face of $cl(D_2)\cup\mathbb{A}$, then $\Delta_{\arg}(\partial F)=2\pi n$ for some non-negative integer $n$, and if $n>0$ then $F$ contains a single distinct implied zero of $h$ with multiplicity $n$.

We now describe the phenomenon of implied critical points of $h$, and describe how we take account of them by adding certain extended level curves to $\mathbb{A}$.  We begin by defining an ordering on the members of $\mathbb{A}$.

\begin{definition}
For any $\Lambda_1,\Lambda_2\in\mathbb{A}$, if $\Lambda_1$ is contained in one of the bounded faces of $\Lambda_2$ then we write $\Lambda_1\prec\Lambda_2$.  For any domain $G$ in $\mathbb{C}$, if $\Lambda_1\in\mathbb{A}$ with $\Lambda_1\subset G$, and there is no other $\Lambda_2\in\mathbb{A}$ such that $\Lambda_1\prec\Lambda_2\subset G$, then we say that $\Lambda_1$ is \textit{$\prec$-maximal} in $G$.
\end{definition}

In view of Theorem~\ref{thm: Level curves separated by critical level curve.}, if the extensions of the level curves of $h$ in $cl(D_2)$ are to form a member of $PC_a$, then for each $\Lambda\in\mathbb{A}$, and bounded face $G$ of $\Lambda$, there should be a unique member of $\mathbb{A}$ in $G$ which is $\prec$-maximal in $G$.  From our work above accounting for implied zeros of $h$, it follows that each such $G$ contains at least one member of $\mathbb{A}$, and thus (because $\mathbb{A}$ is finite) at least one $\prec$-maximal member of $\mathbb{A}$.

If some such $G$ contains several $\prec$-maximal members of $\mathbb{A}$, then in order to ensure that the result of Theorem~\ref{thm: Level curves separated by critical level curve.} holds for the members of $\mathbb{A}$, we will extend one of the level curves of $h$ in $G$ to form an analytic critical level curve type set such that several of the $\prec$-maximal members of $\mathbb{A}$ in $G$ are contained in different of its faces.  The vertex of this new extension will represent an implied critical point of $h$ in $cl(D_2)$.

Suppose that $G$ is some fixed bounded face of a member of $\mathbb{A}$ which contains several $\prec$-maximal members of $\mathbb{A}$.  Let $\epsilon>0$ denote the value that $|h|$ takes on $\Lambda$.  Let $\Lambda_1,\ldots,\Lambda_N$ denote the members of $\mathbb{A}$ which are $\prec$-maximal in $G$.  For each $i\in\{1,\ldots,N\}$, define $\epsilon_i$ to be the value $|h|$ takes on $\Lambda_i$, and assume that $\Lambda_1,\ldots,\Lambda_N$ are ordered so that $\epsilon_1\leq\cdots\leq\epsilon_N$.  Then, again for each $i\in\{1,\ldots,N\}$, if $\widetilde{\epsilon}\in(\epsilon_N,\epsilon)$, then $\widetilde{\epsilon}\in(\epsilon_i,\epsilon)$ so, by the continuity of $|h|$, there is some level curve of $h$ in $G$ whose full extension contains $\Lambda_i$ in its bounded face, and on which $|h|$ takes the value $\widetilde{\epsilon}$.  By the maximum modulus theorem this level curve is unique, and we denote its full extension by $\widetilde{\Lambda_i}$ (which, we note, does depend on the choice of $\widetilde{\epsilon}\in(\epsilon_N,\epsilon)$).  (Note that it may be that $\widetilde{\Lambda_i}=\widetilde{\Lambda_j}$ even if $i\neq j$.)  Since the level curves of $h$ vary continuously in $cl(D_2)$, and by the definition of the of extension paths, if $\widetilde{\epsilon}\in(\epsilon_N,\epsilon)$ is sufficiently close to $\epsilon_N$, then $\widetilde{\Lambda_N}$ approximates $\Lambda_N$, and consists of a single simple closed path in $G$ containing $\Lambda_N$ in its bounded face and each $\Lambda_i$ (for $i\in\{1,\ldots,N-1\}$) in its unbounded face.  On the other hand, again by the continuity of the level curves of $h$ in $D_2$ and the definition of extension paths, if $\widetilde{\epsilon}\in(\epsilon_N,\epsilon)$ is sufficiently close to $\epsilon$, then $\widetilde{\Lambda_N}$ approximates $\partial G$, and contains $\Lambda_i$ in its bounded face for each $i\in\{1,\ldots,N\}$.

Therefore if we begin $\widetilde{\epsilon}\in(\epsilon_N,\epsilon)$ close to $\epsilon_N$, and allow $\widetilde{\epsilon}$ to increase towards $\epsilon$, eventually $\widetilde{\Lambda_N}$ will collide with some $\widetilde{\Lambda_i}$ with $i\neq N$.  If this collision were to happen in $cl(D_2)$, then the collision point of the level curves would be a vertex of a level curve of $h$ in $cl(D_2)$, and thus an explicit critical point of $h$ in $cl(D_2)$.  However this is impossible because all explicit critical level curves of $h$ in $cl(D_2)$ are in $\mathbb{A}$, and $\Lambda_N$ is $\prec$-maximal in $G$.  Therefore the collision point must come in some face of $G\setminus cl(D_2)$.  That is, there is some face $F$ of $G\setminus cl(D_2)$, and some $\widetilde{\epsilon}\in(\epsilon_N,\epsilon)$ such that for some $i\in\{1,\ldots,N-1\}$, $\widetilde{\Lambda_i}$ and $\widetilde{\Lambda_N}$ both intersect $F$.  Since $F\subset cl(D_2)^c$, this means that there are some extension paths $\gamma_i$ and $\gamma_N$ contained in $\widetilde{\Lambda_i}$ and $\widetilde{\Lambda_N}$ respectively which are contained in $F$.

Again since $\Lambda_i$ and $\Lambda_N$ are $\prec$-maximal in $G$, there is no member of $\mathbb{A}$ in $G$ which contains both $\widetilde{\Lambda_i}$ and $\widetilde{\Lambda_N}$ in its bounded faces, and thus $\gamma_i$ and $\gamma_N$ are not separated in $G$ by any member of $\mathbb{A}$.  Therefore $\gamma_i$ and $\gamma_N$ are in the boundary of some common face $F_0$ of $G\setminus\left(cl(D_2)\cup\ds\bigcup_{\Lambda\in\mathbb{A}}\Lambda\right)$.

$F_0$ contains no zeros (explicit or implit) of $h$, so the net change of $\arg(h)$ along $\partial F_0$ is $0$.  Since $\arg(h)$ is constant on any gradient line segment of $\partial F_0$, it follows that the total change in $\arg(h)$ along the bottom edges of $\partial F_0$ is equal in magnitude to the total change in $\arg(h)$ along the top edges of $\partial F_0$.  Select some initial point $u_0$ in a gradient line portion of $\partial F_0$, and parameterize $\partial F_0$ by a map $\psi:[0,1]\to\partial F_0$.  Define $\alpha_0$ to be the choice of $\arg(h(u_0))$ in $[0,2\pi)$, and define $\alpha:[0,1]\to\mathbb{R}$ by $\alpha(t)=\arg(h(\psi(t)))$, chosen so that $\alpha$ is continuous.  If we collapse the parameterization so that it ignores gradient line segments of $\partial F_0$ (ie we mod out $\partial F_0$ by gradient lines), then $\alpha$ alternates between strictly increasing and strictly decreasing.  Now, $\partial F_0$ alternates between top and bottom edges (after modding out by gradient lines), and $\partial F_0$ has several distinct bottom edges (ie the $\gamma_i$ and $\gamma_N$), thus we have a continuous function $\alpha$ on $[0,1]$ which alternates finitely many times between strictly increasing and strictly decreasing (strictly increasing twice or more and strictly decreasing twice or more) and $\alpha(0)=\alpha(1)$ (since the net change in $\arg(h)$ around $\partial F_0$ is $0$).  It follows then that there are two distinct intervals $[a_1,b_1]$ and $[a_2,b_2]$ such that $0\leq a_1<b_1<a_2<b_2\leq1$ such that $\alpha$ is increasing on $(a_1,b_1)$ and $(a_2,b_2)$, and $\alpha$ is decreasing on $(b_1,a_2)$, and there are some $t_1\in(a_1,b_1)$ and $t_2\in(a_2,b_2)$ with $\alpha(t_1)=\alpha(t_2)$.  Let $E_1$ denote the full extension containing the point $\psi(t_1)$ and $E_2$ the full extension containing the point $\psi(t_2)$.  We now join $E_1$ and $E_2$ at the points $\psi(t_1)$ and $\psi(t_2)$ respectively.  We include the resulting graph in $\mathbb{A}$.  This point $\psi(t_1)$ (which now equals $\psi(t_2)$) represents one of the implied critical points of $h$ in $cl(D_2)$.  In this case we define $\arg(h(\psi(t_1)))$ to be the value $\alpha(t_1)$.  Note that since $E_1$ and $E_2$ are each analytic level curve type sets, and are each contained in the unbounded face of the other, the graph obtained by joining the two at a single point is again an analytic critical level curve type set.

We have already shown that full extensions of level curves of $h$ in $cl(D_2)$ are analytic level curve type sets.  We will now show that each member of $\mathbb{A}$ forms a member of $P_a$.


Let $\Lambda$ be some member of $\mathbb{A}$.  Define $H(\langle\Lambda\rangle_{Pa})$ to be the value $|h|$ takes on $\Lambda$.

We say that any point in $\Lambda\cap cl(D_2)$ at which $h$ takes a positive real value a distinguished point of $\Lambda$.  If $\gamma_u$ is any extension path used to construct $\Lambda$, let $\alpha_0$ denote any choice of $\arg(h(u))$.  Let $n$ denote the number of integer multiples of $2\pi$ in the interval $(\alpha_0,\alpha_0+\Delta_{\arg}(\gamma_u))$.  Then we distinguish $n$ distinct points in $\gamma_u$.  If $\gamma_u$ is joined to some other extension path just choose these distinguished points to be distributed in a way that coincides with the value of $\arg(h)$ at the join point.

For each vertex $v$ of $\Lambda$, we may define $a(v)\colonequals\arg(h(v))$.

If $F$ is a bounded face of $\Lambda$, then we define $z(F)$ to be the number of distinguished points in $\partial F$.  If $F_1,\ldots,F_n$ is an enumeration of the bounded faces of $\Lambda$, we define $Z(\langle\Lambda\rangle_{P_a})\colonequals\ds\sum_{i=1}^nz(F_i)$.

Again let $F$ be one of the bounded faces of $\Lambda$.  The only thing to verify before concluding that $\langle\Lambda\rangle_{P_a}$, with the auxiliary data we have just defined, is a member of $P_a$ is that if $x_1$ and $x_2$ are distinct vertices of $\Lambda$ in $\partial{F}$ such that $a(x_1)\geq{a(x_2)}$, then there is some distinguished point $z\in\partial{F}$ such that $x_1,z,x_2$ is written in increasing order as they appear in $\partial{F}$.  Assume that $x_1$ and $x_2$ are as described.  Let $L$ denote the segment of $\partial F$ with end points $x_1$ and $x_2$ such that $x_1$ is the initial point of $L$ if $L$ is traversed with positive orientation around $\partial F$.  If $L$ is contained entirely in $cl(D_2)$, the desired result follows directly from the fact that $h$ is analytic on $cl(D_2)$ (since $\arg(h)$ will be strictly increasing as $L$ is traversed).  Suppose now that there is some single extension path $\gamma$ which forms a part of $L$, and $L\setminus\gamma\subset cl(D_2)$.  Let $u$ denote the initial point of $\gamma$ as $\gamma$ is traversed with positive orientation.  Suppose that there is no distinguished point in $L\setminus\gamma$.  Let $\Delta_1$ denote the total change in $\arg(h(z))$ as $z$ traverses $L$ from $x_1$ to $u$.  Let $\Delta_2$ denote the total change in $\arg(h(z))$ as $z$ traverses $L$ from $\mathcal{N}(u)$ to $x_2$.  Then $\arg(h(x_2))=\arg(h(x_1))+\Delta_1+\Delta_{\arg}(\gamma)+\Delta_2$.  Since the choice of $\arg(h(x_2))$ in $[0,2\pi)$ is less than the choice of $\arg(h(x_1))$ in $[0,2\pi)$, it follows that there is at least one multiple of $2\pi$ in the interval $(\arg(h(x_1)),\arg(h(x_1))+\Delta_1+\Delta_{\arg}(\gamma)+\Delta_2)$.  If this multiple occurs in $(\arg(h(x_1)),\arg(h(x_1))+\Delta_1)$ or in $(\arg(h(x_1))+\Delta_1+\Delta_{\arg}(\gamma),\arg(h(x_1))+\Delta_1+\Delta_{\arg}(\gamma)+\Delta_2)$, then there would be a point in $L\setminus \gamma$ at which $h$ takes a positive real value, and thus $L$ has a distinguished point at that point.  If it occurs in the interval $(\arg(h(x_1))+\Delta_1,\arg(h(x_1))+\Delta_1+\Delta_{\arg}(\gamma))$, then by the definition of distinguished points above, $\gamma$ contains a distinguished point, which establishes the desired result.  (If $L$ contains several extension paths, make the appropriate minor changes.)  Therefore we conclude that the full extension of any level curve of $h$ in $cl(D_2)$ gives rise naturally to a member of $P_a$.

We now wish to show that the members of $\mathbb{A}$ together form a member of $PC_a$.  We do this recursively.  Of course the single point members of $\mathbb{A}$ are members of $PC_a$ on their own.  Let $\Lambda$ be some non-single point member of $\mathbb{A}$, and let $G$ denote one of the bounded faces of $\Lambda$.  By our construction of the implied critical points of $h$, there is a unique $\prec$-maximal member $\Lambda_G$ of $\mathbb{A}$ in $G$.  Assume recursively that we have formed a member $\langle\Lambda_G\rangle_{PC_a}$ of $PC_a$ from $\Lambda_G$.  If $\Lambda$ is a simple closed path (so $G$ is the only face of $\Lambda$), we just define $\langle\Lambda\rangle_{PC_a}$ to be $\langle\Lambda_G\rangle_{PC_a}$.  Otherwise, we assign $\langle\Lambda_G\rangle_{PC_a}$ to $G$, and the only remaining thing left to do is determine gradient maps from the distinguished points in $\partial G$ to the distinguished points in $\Lambda_G$.  This may easily be done by just selecting a map which harmonizes with any actual gradient lines of $h$ in $cl(D_2)$ which connect $\partial G$ to $\Lambda_G$.

We conclude finally that the members of $\mathbb{A}$ form a member of $PC_a$.  Let $\langle\Lambda_{cl(D_2)}\rangle_{PC_a}$ denote this member.  By Theorem~\ref{thm: Polynomial critical level curve config existence.}, we may find some polynomial $p\in\mathbb{C}[z]$ with critical level curve configuration equal to $\langle\Lambda_{cl(D_2)}\rangle_{PC_a}$.  Define $G_p\colonequals\{z\in\mathbb{C}:|p(z)|<1\}$.  The furthest outside full extension of a level curve of $h$ in $cl(D_2)$ (ie the one on which $|h|$ is maximal) is a simple closed path.  Let this full extension be denoted $\Lambda_{whole}$.  Let $D_3$ denote the bounded face of $\Lambda_{whole}$.  Remove the members of $\mathbb{A}$ from $D_3$.  Let $F$ denote one of the components of the remaining set (ie one of the components of $D_2\setminus\mathbb{A}$), and let $\widehat{F}$ denote one of the components of $F\cap cl(D_2)$.  Let $\widehat{F}_p$ denote the corresponding portion of $G_p$.  It is not hard to show (by, for example, an extension of the corresponding argument in~\cite{Ri}) that there is an bijective analytic map $\phi:\widehat{F}\to\widehat{F}_p$ such that $h=p\circ\phi$ on $\widehat{F}$.  This is because the image of $\widehat{F}$ under $h$ (or $\widehat{F}_p$ under $p$) is a polar rectangle.

Having defined this on each component of the set remaining from $cl(D_2)$ after the members of $\mathbb{A}$ have been removed, it may easily be seen that the map $\phi$ extends continuously to the points in $cl(D_2)$ which are in members of $\mathbb{A}$ by choice of the polynomial $p$.  By the Schwartz reflection principle this now gives us the desired result.
\end{proof}
\section{DIRECTIONS FOR FUTURE WORK}\label{sect: Future work.}%
As mentioned in Section~\ref{sect: Construction of PC_a.}, the original construction of $PC_a$ (in~\cite{Ri}) parameterized also the possible critical level curve configurations for function pairs $(f,G)$, where $f$ is allowed to be meromorphic in $G$ (which we then call a \textit{generalized finite Blaschke ratio}, since $f$ would pull back to a ratio of finite Blaschke products on the disk), and the result of Theorem~\ref{thm: Conformal equivalence same as topological equivalence.} applies to generalized finite Blaschke ratios.  However there is at present no meromorphic version of Theorem~\ref{thm: Polynomial critical level curve config existence.}.  It would be desirable to develop such a generalization of Theorem~\ref{thm: Polynomial critical level curve config existence.}, and apply the methods found in this paper to obtain the following simply connected version of Theorem~\ref{thm: Analytic implies equiv to poly.} for meromorphic functions.

\begin{customthm}{\ref{thm: Analytic implies equiv to poly.}b}
If $D\subset\mathbb{C}$ is a Jordan domain, and $f$ is meromorphic on $cl(D)$, then there is an injective analytic map $\phi:D\to\mathbb{C}$, and a rational function $r\in\mathbb{C}(z)$ such that $f\equiv r\circ\phi$ on $D$.
\end{customthm}

Finally, it appears that the level curve approach to the proof of Theorem~\ref{thm: Analytic implies equiv to poly.} has some hope of finding the rational conformal model of the function $f$ which has the lowest degree of any rational conformal model.  The way in which one would proceed would be to show that the extended critical level curve configuration which we constructed in Section~\ref{sect: Proof of main result.} is in some sense canonical, and that no critical level curve configuration of a rational function with smaller degree can extend the configuration of level curves of $f$ found in $D$.

\bibliographystyle{plain}
\bibliography{refs}

\end{document}